\newtheorem{thm}{Theorem}[section]
\newtheorem{prop}[thm]{Proposition}
\newtheorem{lem}[thm]{Lemma}
\newtheorem{cor}[thm]{Corollary}
\theoremstyle{definition}\newtheorem{defn}[thm]{Definition}
\theoremstyle{definition}\newtheorem{question}[thm]{Question}
\theoremstyle{remark}
\theoremstyle{definition}
\theoremstyle{definition}\newtheorem{vsex}[thm]{Counter-example}
\def\R{{\mathbb R}}
\def\C{{\mathbb C}}
\def\Z{{ \mathbb Z}}
\def\N{{ \mathbb N}}
\def\Q{{ \mathbb Q}}
\def \mod{{\;\mbox{\rm mod}}\;}
\def \rk{{\mbox{\rm rk}}}
\def \deg{{\mbox{\rm deg}\,}}
\def \sgn{{\mbox{\rm sgn}}}
\def \air{{\vskip 12pt\noindent}\par}
\def \ib{\begin{enumerate}}
\def \ie{\end{enumerate}}
\def \det{{\mbox{\rm det}\,}}
\def\ib {\begin{enumerate}}
\def\ie {\end{enumerate}}
\def \Spr {{\rm Spec_r}}
\def \supp {{\rm supp}}
\def \p {{\mathfrak p}}
\def \diag {{\rm diag}}
\def \PI {{\bf (PNRI)\;}}
\def \GE {{\bf (GRI)}}
\def \dd {{\mathfrak d}}
\def\adots{\mathinner{\mkern2mu\raise 1pt\hbox{.}\mkern 3mu\raise
4pt\hbox{.}\mkern1mu\raise 7pt\hbox{{.}}}}
\title{On Positive Matrices which have a positive Smith Normal Form}
\author{Ronan Quarez}
\address{IRMAR (CNRS, URA 305), Universit\'e de Rennes 1, Campus de Beaulieu\\ 35042 Rennes Cedex, France} 
\email{e-mail : ronan.quarez@univ-rennes1.fr}
\date{\today} 
\keywords{} 
\subjclass[2000]{}
\begin{document}
\maketitle

\begin{abstract}
It is known that any symmetric matrix $M$ with entries in $\R[x]$ and which is positive semi-definite for any substitution of $x\in\R$, has a Smith normal form whose diagonal coefficients are constant sign polynomials in $\R[x]$. \par We generalize this result by considering a symmetric matrix $M$  with entries in a formally real principal domain $A$, we assume that $M$ is positive semi-definite for any ordering on $A$ and, under one additionnal hypothesis concerning non-real primes, we show that the Smith normal of $M$ is positive, up to association. Counterexamples are given when this last hypothesis is not satisfied.\par  We give also a partial extension of our results to the case of Dedekind domains. 
\end{abstract}

\section{Introduction}
Arising in various areas in Mathematics, there is a seminal result (we refer to \cite{Dj}) which says that any $n\times n$-symmetric matrix $M$ with entries in $\R[x]$ which is positive for any substitution of $x\in\R$ is a matricial sum of squares (it can be written $M=\sum_{i=1}^2N_iN_i^T$ where $N_i$ is a $n\times n$-matrice with entries in $\R[x]$).
The proof of this result as given in  \cite{Dj} uses, as a prerequisite, that $M$ has a smith normal form whose diagonal coefficients are polynomials in $\R[x]$ of constant sign.
In this article we are concern with this last property. \par Since any matrix with entries in a Principal Ideal Domain (PID in short) admits a Smith Normal Form, we consider the following :
\begin{question}\label{question}
Let $M$ be a symmetric square matrix with entries in a principal ring $A$. Assume that $M$ is positive semi-definite.  Are all the diagonal elements of its Smith Normal Form positive semi-definite up to association ?
\end{question}

Before giving a precise meaning to this question in an abstract setting, we may note that the answer to this question should clearly be positive whenever the matrix $M$ is diagonal, i.e. when $M$  is already given in its Smith Normal Form.\air 

If $A=\R[x]$ the ring of all polynomials in one variable over the reals, then the positivity of a matrix $M$ in $A^{n\times n}$ can be understood as the positivity when evaluated at any point $x\in\R$, i.e. $\phi_y(M)$ is positive-semi-definite (psd in short) for any evaluation ring-homomorphism $\phi_{y}:\R[x]\rightarrow\R$ which maps $p(x)$ onto $p(y)$. The natural extension is to consider what happens if we change $A=\R[x]$ with $A=k[x]$ where $k$ is any field. And more generally, when $A$ is an abstract principal ring ?\air

It appears to be quite natural to introduce the real spectrum of the ring $A$, which is the set of all couples $(\p,\leq)$ where $\p$ is a prime ideal of $A$ and $\leq$ is an ordering onto the field of fractions of $A/\p$. The real spectrum of $A$ can be described equivalently as the set of all ring-morphisms of $A$ into a real closed field. See Section \ref{Spr} for precise definitions and properties.

Then, saying that the matrix $M$ is positive semi-definite will mean that it is positive semi-definite with respect to any point $\alpha$ of $\Spr A$, the real spectrum of $A$, i.e. the matrix $\phi(M)$ is positive semi-definite for any ring-morphism $\phi:A\rightarrow R$ where $R$ is a real closed field.
This notion obviously coincides with the common notion of positivity in the case $A=\R[x]$.

\air 
Following the proof of \cite{Dj}, we answer Question \ref{question} by the affirmative for all principal rings $A$ such that any non-real irreducible can be associated to a positive non-real irreducible, a condition called $\PI$ in the following.
For instance, $\PI$ is satisfied when the real spectrum $\Spr A$ is a connected topological space.\par

The first example of principal domains are rings of number fields : they are treated in Section \ref{number_fields}. 
The other wide class of rings with interesting arithmetic properties are rings of coordinates of affine irreducible non-singular curves. Althought only few of them are principal, they all are Dedekind domains so, in secton \ref{Dedekind} we give some partial extensions of our framework to Dedekind domains. 


\section{Preliminaries}

The basic facts of this section are taken from \cite{BCR} and for some others we will refer to \cite{ABR}.

\subsection{The real spectrum of a ring}\label{Spr}
The ring $A$ admits an ordering if and only if $-1$ is not  sum of squares in $A$, we say then that $A$ is {\it formally real}. A prime ideal $\p$ of $A$ will be called {\it real} if the quotient ring $A/\p$ is formally real. For example, in an Unique Factorization Domain (UFD in short), an irreducible $p$ will be called real if it generates a real prime ideal.
\par
The real spectrum $\Spr A$ of a ring A is defined to be the set of all couples $\alpha=(\p,\leq_\alpha)$ where $\p$ is a real prime ideal of $A$ and $\leq_\alpha$ is an ordering on $A/\p$. We say that $\p$ is the support of $\alpha$ and denote it by $\p=\supp(\alpha)$. Equivalently, an element $\alpha\in\Spr A$ is given by of a morphism $\phi:A\rightarrow R$ where $R$ is a real closed field. Given such a data, $\phi^{-1}(0)=\p$ is a real prime ideal and the unique ordering on $R$ induces an ordering $\leq_\alpha$ onto $A/\p$. \par
It is then clear that $\Spr K$ can be seen as a subset of $\Spr A$
where $K$ stands for the fraction field of a domain $A$. 

\air
For a given $a\in A$, we say that $a> 0$ (resp. $a\geq 0$) if for all $\alpha\in\Spr A$, $a>_\alpha0$ (resp. $a\geq_\alpha 0$). Moreover, we note $\sgn[a](\alpha)=+1$ (respectively $\sgn[a](\alpha)=-1$, $\sgn[a](\alpha)=0$) if $a>_\alpha 0$ (respectively $a<_\alpha 0$, $a\in\supp(\alpha)$).\par
Now, if $M\in A^{n\times n}$ is a symmetric matrix with entries in $A$, we say that $M$ is  {\it positive-semi-definite} (psd in short) if for any morphism $\phi: A\rightarrow R$ with $R$ a real closed field, the matrix $\phi(M)$ is psd.
\air
The real spectrum of $A$ has a natural topology admitting as a basis of open subsets all the sets $(\{\alpha\in \Spr A\mid a>_\alpha0\})_{a\in A}$.

\subsection{Generizations}
We say that $\beta$ is a generization of $\alpha$ and we denote it by $\beta\rightarrow \alpha$ if $\alpha$ belongs to the closure of $\beta$. It is equivalent to saying that for all $a\in A$, if $a(\beta)\geq 0$, then $a(\alpha)\geq 0$.
\par We begin with an easy observation that will be used several time in the sequel. 
\begin{lem} Let $A$ be a UFD. 
Let also $a=p^sa'$ where $s$ is an odd integer, $p$ is a real irreducible and $p\nmid a'$ (which means that $p$ does not divide $a'$). Let $\alpha\in\Spr A$ and assume that there are two generizations $\alpha_+$ and $\alpha_-$ of $\alpha$ such that $p>_{\alpha_+}0$ and  $p<_{\alpha_-}0$. Then, $$\sgn[a](\alpha_+)\cdot\sgn[a](\alpha_-)=-1.$$
\end{lem}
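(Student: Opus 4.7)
The plan is to combine the multiplicativity of the sign function on $\Spr A$ with the generization property stated above (namely that if $\beta\to\alpha$ and $b(\alpha)>0$ then $b(\beta)>0$, and similarly for strictly negative signs).

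First, multiplicativity of $\sgn[\cdot]$ at any point $\beta$ yields $\sgn[a](\beta)=\sgn[p]^s(\beta)\cdot\sgn[a'](\beta)$, and since $s$ is odd this simplifies to $\sgn[p](\beta)\cdot\sgn[a'](\beta)$. Specialising to $\beta=\alpha_\pm$ and using the hypothesis $\sgn[p](\alpha_+)=+1$, $\sgn[p](\alpha_-)=-1$, one obtains
\[\sgn[a](\alpha_+)\cdot\sgn[a](\alpha_-)=-\,\sgn[a'](\alpha_+)\cdot\sgn[a'](\alpha_-).\]
So the task is reduced to showing that $a'$ has a common nonzero strict sign at $\alpha_+$ and $\alpha_-$.

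Next, I claim that $p\in\supp(\alpha)$. Indeed, if $p$ were not in $\supp(\alpha)$, then $\sgn[p](\alpha)\in\{\pm 1\}$, and by the generization property this strict sign would be transported to both $\alpha_+$ and $\alpha_-$, contradicting the assumption that they give opposite signs to $p$. Once $p\in\supp(\alpha)$, the irreducibility of $p$ together with the fact that the applications of the lemma are made in a PID (where every nonzero prime is generated by an irreducible up to association) forces $\supp(\alpha)=(p)$. The hypothesis $p\nmid a'$ then gives $a'\notin\supp(\alpha)$, so $\sgn[a'](\alpha)\in\{\pm 1\}$.

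Finally, applying the generization property to $a'$ (and to $-a'$), both $\sgn[a'](\alpha_+)$ and $\sgn[a'](\alpha_-)$ coincide with $\sgn[a'](\alpha)$. Their product is therefore $\sgn[a'](\alpha)^2=+1$, and combining with the first step yields $\sgn[a](\alpha_+)\cdot\sgn[a](\alpha_-)=-1$. The point I expect to need most care is the identification $\supp(\alpha)=(p)$: it rests on the one-dimensional structure of the ambient ring, and without this (or some height restriction on $\supp(\alpha)$ when $A$ is only assumed to be a UFD) a higher prime containing both $p$ and $a'$ could a priori spoil the argument.
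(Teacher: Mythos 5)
Your argument is correct and is essentially the paper's own proof: multiplicativity of the sign along the factorization $a=p^sa'$ with $s$ odd, the observation that $a'\notin\supp(\alpha)$, and propagation of the strict sign of $a'$ from $\alpha$ to both generizations $\alpha_+$ and $\alpha_-$. Your closing caveat is well placed: the paper simply writes $\supp(\alpha)=(p)$ even though this is not among the stated hypotheses, and the lemma is in fact only invoked in situations (condition (GRI), or a PID) where $\alpha$ is chosen with support exactly $(p)$, which is what guarantees $a'\notin\supp(\alpha)$.
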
 
\begin{proof} Indeed, by assumption 
$\sgn[p](\alpha_+)\cdot\sgn[p](\alpha_-)=-1$ and, since $p\nmid a'$, we have  $\sgn[a'](\alpha_+)=\sgn[a'](\alpha_-)=\sgn[a'](\alpha).$
Note also that $(p)$ is prime since $A$ is UFD, and so $ a'\notin\supp(\alpha)=(p)$.
\end{proof}
We will need also the following :
\begin{lem}\label{regular_generization}
Let $p$ be an irreducible of a formally real domain $A$ such that $(p)$ is a real prime ideal of $A$. Assume that $A/(p)$ is regular. Then, for any $\alpha\in\Spr A$ whose support is $(p)$ there are two generizations $\alpha_+,\alpha_-\in\Spr(K)$ where $K$ is the fraction field of $A$. Moreover, we may take $\alpha_+,\alpha_-$ such that $p>_{\alpha_+}0$ and $p<_{\alpha_-}0$.
\end{lem}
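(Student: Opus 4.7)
My plan is to use the localization at $(p)$ to produce the required generizations. Since $p$ is a nonzero non-zero-divisor of $A$ and $A/(p)$ is regular, the classical criterion (in the noetherian setting) implies that $A_{(p)}$ is a regular local ring of dimension $1$, i.e.\ a DVR with uniformizer $p$ and residue field $\kappa(p):=\mathrm{Frac}(A/(p))$. Let $v:K^\times\to\Z$ be the associated valuation, and extend the ordering $\alpha$ on $A/(p)$ to an ordering $\tilde\alpha$ on $\kappa(p)$ in the standard way.

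I would then build the two orderings $\alpha_+,\alpha_-$ on $K$ by lifting $\tilde\alpha$ along $v$. Concretely, writing any nonzero $a\in K$ as $a=p^s u$ with $s=v(a)\in\Z$ and $u\in A_{(p)}^\times$ of nonzero residue $\bar u\in\kappa(p)^\times$, set
\[
a>_{\alpha_+}0 \;\iff\; \bar u>_{\tilde\alpha}0,\qquad a>_{\alpha_-}0 \;\iff\; (-1)^s\bar u>_{\tilde\alpha}0.
\]
A routine valuation case-analysis shows that these are indeed orderings (the only delicate point is additivity: when $v(a)=v(b)$ and $v(a+b)$ strictly exceeds this value, the residues must cancel in $\kappa(p)$, which is incompatible with both $a$ and $b$ being positive). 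By construction $p>_{\alpha_+}0$ and $p<_{\alpha_-}0$. Alternatively, this step is a direct application of the Baer--Krull theorem for a discrete valuation, which yields exactly two orderings of $K$ lifting $\tilde\alpha$, distinguished precisely by the sign of the uniformizer $p$.

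To finish, I would verify that $\alpha_\pm\in\Spr(K)$ are generizations of $\alpha$, i.e.\ that $a(\alpha_\pm)\geq 0\Rightarrow a(\alpha)\geq 0$ for every $a\in A$. If $a\in(p)=\supp(\alpha)$ then $a(\alpha)=0\geq 0$ trivially. Otherwise $v(a)=0$, so $\bar a\in\kappa(p)^\times$ and by construction $a(\alpha_\pm)$ coincides with the $\tilde\alpha$-sign of $\bar a$, which equals $a(\alpha)$ since $\tilde\alpha$ extends $\alpha$. The main obstacle is really the axiom-checking for $\alpha_\pm$ in the middle paragraph; leaning on the Baer--Krull machinery packages this nuisance into a ready-to-use statement.
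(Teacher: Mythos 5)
Your proof is correct and follows essentially the same route as the paper: localize at $(p)$ to obtain a DVR with uniformizer $p$ and residue field $\mathrm{Frac}(A/(p))$, then lift the given ordering to two orderings of $K$ distinguished by the sign of $p$. The paper simply cites \cite[II.Proposition 3.3]{ABR} for this lifting step, which is the Baer--Krull statement you invoke (and also spell out explicitly).
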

\begin{proof}
The ring $A_{(p)}$ is a discrete valuation ring of rank  $1$. Its fraction field is $K$ and its residual field is $k$ the fraction field of the ring $A/(p)$. 
According to \cite[II.Proposition 3.3]{ABR}, any ordering $\alpha\in\Spr k$ admits at least two generizations in $\Spr K$ as wanted.
\end{proof}

\section{Unicity of the Smith Normal Form}
Let $A$ be a domain, and consider the usual equivalence relation on the set of all matrices in $A^{n\times n}$ : $M\sim N$ if there are two matrices $P,Q\in A^{n\times n}$ invertibles in $A$ ($\det P$ and $\det Q$ are units in $A$) such that $M=PNQ$.\par
Let $\diag(a_1,\ldots,a_n)$ be the diagonal matrix in $A^{n\times n}$ whose coefficients onto the diagonal are $(a_1,\ldots,a_n)$.\par
About the equivalence class of diagonal matrices, recall the well known result over a PID :

\begin{thm}
Let $A$ be a PID. Then, any matrix $M\in A^{n\times n}$ is equivalent to a diagonal matrix $D=\diag(d_1,\ldots,d_r,0,\ldots,0)$ with $d_k\mid d_{k+1}$ for all $k=1\ldots r-1$. Moreover the $d_k$'s are unique up to association.
\end{thm}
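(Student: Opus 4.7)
The plan is to establish existence by an algorithmic reduction using elementary operations over the PID $A$, and then to prove uniqueness by identifying each $d_k$ with a ratio of intrinsic minor-gcd invariants attached to $M$.

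For existence, I would argue by induction on $n$. On $A\setminus\{0\}$, introduce the length function $\ell(a)$ counting prime factors with multiplicity, which is well-defined since a PID is a UFD. Starting from a non-zero entry of minimal length brought by row and column permutations to position $(1,1)$, the key move is Bezout: if some $a_{1j}$ is not a multiple of $a_{11}$, writing $d=\pgcd(a_{11},a_{1j})=ua_{11}+va_{1j}$ yields a unimodular $2\times 2$ column operation replacing $a_{11}$ by $d$, of strictly smaller length. Iterating this move on the first row and then on the first column (re-establishing minimality by permutations as needed), the process terminates because $\ell$ takes values in $\N$. One reaches a matrix in which $a_{11}$ divides every entry of the first row and of the first column, which can then be cleared by standard elementary operations, yielding a block form $\diag(a_{11},M')$. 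A small but essential check is that $a_{11}$ also divides every entry of $M'$: if some $a_{ij}$ with $i,j\geq 2$ were not a multiple, adding row $i$ to row $1$ would reintroduce a first-row entry outside $(a_{11})$, contradicting the termination condition—so we restart the reduction, and $\ell$ strictly decreases again. Applying the induction hypothesis to $M'$ gives the sought diagonal form $\diag(d_1,\ldots,d_r,0,\ldots,0)$ with $d_1\mid d_2\mid\cdots\mid d_r$.

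For uniqueness, I would introduce the $k$-th determinantal divisor $\Delta_k(M)$, defined up to association in the PID $A$ as a gcd of all $k\times k$ minors of $M$, with the convention $\Delta_0(M)=1$. Invariance of $\Delta_k$ under the equivalence $M\sim PMQ$ with $P,Q$ invertible over $A$ follows from Cauchy--Binet: every $k\times k$ minor of $PM$ is an $A$-linear combination of $k\times k$ minors of $M$, so $\Delta_k(M)\mid\Delta_k(PM)$, and the reverse divisibility comes by applying the same argument to $P^{-1}(PM)=M$; symmetrically for $Q$. Evaluating on the diagonal form $D=\diag(d_1,\ldots,d_r,0,\ldots,0)$ and using the divisibility chain, one finds $\Delta_k(D)=d_1\cdots d_k$ up to association for $k\leq r$, and $\Delta_k(D)=0$ for $k>r$. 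Hence $r$ is the largest index $k$ with $\Delta_k(M)\neq 0$, and $d_k=\Delta_k(M)/\Delta_{k-1}(M)$ is determined by $M$ up to association.

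The main obstacle lies in the existence step, specifically in ensuring that the reduction really terminates and that the diagonal $a_{11}$ obtained at each stage divides the entire residual submatrix; both points are controlled by strict decrease of $\ell$. Uniqueness is conceptually cleaner once the determinantal divisors are in hand. Since this theorem is classical and is recalled here only to fix notation for the sequel, a brief treatment along these lines is sufficient.
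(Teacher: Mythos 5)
Your proof is correct. The paper itself only recalls this theorem as classical and supplies no proof of the existence half; your algorithmic reduction via B\'ezout $2\times 2$ unimodular moves and the length function $\ell$ is the standard argument and is sound, including the often-omitted verification that $a_{11}$ divides the residual block $M'$. For uniqueness you take essentially the same route as the paper's Proposition \ref{Unicity_Smith}: there the invariants are the ideals $\dd_k(M)$ generated by the $k\times k$ minors, shown invariant under equivalence by Lemma \ref{div_minors} (the same linear-combination-of-minors computation as your Cauchy--Binet step), which works over any domain; in a PID $\dd_k(M)$ is principal, generated by your $\Delta_k(M)$, so the two formulations coincide and yield $d_k=\Delta_k(M)/\Delta_{k-1}(M)$ up to association in the same way.
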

We say then that $D$  is the {\it Smith Normal Form} of the matrix $M$.\air
In fact, in this result the PID hypothesis is essential for the existence of the matrix $D$. Although, the unicity can be obtained for any domain :

\begin{prop}\label{Unicity_Smith}
Let $A$ be a domain. Assume that $D\sim D'$ where $D,D'\in A^{n\times n}$ are diagonal matrices : $D=\diag(d_1,\ldots,d_r,0,\ldots,0)$ with $d_k\mid d_{k+1}$ for all $k=1\ldots r-1$ and $D'=\diag(d'_1,\ldots,d'_s,0,\ldots,0)$ with $d'_k\mid d'_{k+1}$ for all $k=1\ldots s-1$. Then, we have $r=s$ and $(d_k)=(d'_k)$ for all $k$.
\end{prop}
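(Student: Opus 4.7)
The natural approach is via determinantal (Fitting) ideals, which are defined for matrices over any commutative ring and therefore make sense even when $A$ is not a PID. For a matrix $M\in A^{n\times n}$ and an integer $k\geq 1$, let $I_k(M)\subseteq A$ denote the ideal generated by all $k\times k$-minors of $M$, with the convention $I_k(M)=0$ if $k>n$.

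The plan is as follows. First, I would establish that the ideals $I_k$ are invariants of the equivalence relation $\sim$. Indeed, by the Cauchy--Binet formula, each $k\times k$-minor of a product $PM$ is an $A$-linear combination of $k\times k$-minors of $M$, so $I_k(PM)\subseteq I_k(M)$; the same argument applied on the right gives $I_k(MQ)\subseteq I_k(M)$. When $P$ and $Q$ are invertible in $A^{n\times n}$, applying the same inclusions to $P^{-1}(PMQ)Q^{-1}=M$ yields the reverse inclusion, hence $I_k(PMQ)=I_k(M)$.

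Next, I would compute these ideals for a diagonal matrix. If $D=\diag(d_1,\ldots,d_r,0,\ldots,0)$ with $d_k\mid d_{k+1}$ (and the $d_k$ nonzero for $k\le r$, which is forced by the divisibility condition together with the placement of zeros), then the $k\times k$-minors are precisely the products $d_{i_1}\cdots d_{i_k}$ with $1\le i_1<\cdots<i_k$, or zero once one of the indices exceeds $r$. The divisibility chain $d_1\mid d_2\mid\cdots\mid d_r$ implies that every such product is a multiple of $d_1d_2\cdots d_k$, so
\[
I_k(D)=(d_1 d_2\cdots d_k)\text{ for }k\le r,\qquad I_k(D)=0\text{ for }k>r,
\]
and similarly $I_k(D')=(d'_1\cdots d'_k)$ for $k\le s$, zero otherwise.

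Finally, combining the invariance with these computations, the equality $D\sim D'$ gives $I_k(D)=I_k(D')$ for all $k$. Equality of the ranks $r=s$ follows by looking at the largest $k$ for which $I_k$ is nonzero (using that $A$ is a domain so a product of nonzero elements is nonzero). For $k\le r$, we obtain $(d_1\cdots d_k)=(d'_1\cdots d'_k)$ as principal ideals in the domain $A$, so the generators differ by a unit $u_k\in A^\times$. Proceeding by induction on $k$: the case $k=1$ gives $(d_1)=(d'_1)$ directly; assuming $d_1\cdots d_{k-1}=v\,d'_1\cdots d'_{k-1}$ with $v\in A^\times$ and $d_1\cdots d_k=u_k\, d'_1\cdots d'_k$, cancellation of the nonzero factor $d'_1\cdots d'_{k-1}$ in the domain $A$ yields $v\, d_k=u_k\, d'_k$, so $(d_k)=(d'_k)$.

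There is no real obstacle here; the only point demanding care is ensuring that the cancellation used in the inductive step is legitimate, which is why we needed the $d_i$ and $d'_i$ to be nonzero for $i$ in the relevant range and $A$ to be a domain—both hypotheses available.
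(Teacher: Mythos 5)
Your proposal is correct and follows essentially the same route as the paper: determinantal ideals $I_k$, their invariance under multiplication by invertible matrices (the paper proves this by multilinearity of the determinant, which is just Cauchy--Binet written out), and the computation $I_k(D)=(d_1\cdots d_k)$ for a diagonal matrix with a divisibility chain. The only differences are cosmetic --- the paper gets $r=s$ from the rank over the fraction field rather than from the vanishing of $I_k$, and your explicit inductive cancellation step is actually spelled out more carefully than in the paper, which passes from $(d_1\cdots d_k)=(d'_1\cdots d'_k)$ to $(d_k)=(d'_k)$ without comment.
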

We will include the proof for the convenience of the reader :
\begin{proof}
Let $K$ be the field of fractions of the domain $A$. Looking at the rank of the matrices $D$ and $D'$ viewed in $K^{n\times n}$, we get $r=\rk_K(D)=\rk_K(D')=s$. \par
For any matrix $M\in A^{n\times n}$, let us introduce $\dd_k(M)$ the ideal in $A$ generated by all minors of order $k$ of $M$. \par
\begin{lem}\label{div_minors}
Let $M=NP$ where $M,N,P\in A^{n\times n}$. Then, $\dd_k(M)\subset\dd_k(N)$.
\end{lem}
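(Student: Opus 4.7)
The plan is to invoke the Cauchy--Binet formula: for any choice of row indices $I=\{i_1<\cdots<i_k\}$ and column indices $J=\{j_1<\cdots<j_k\}$ of size $k$, the $k\times k$ minor of $M=NP$ extracted on rows $I$ and columns $J$ satisfies
\[
\det(M_{I,J}) \;=\; \sum_{K} \det(N_{I,K})\,\det(P_{K,J}),
\]
where $K$ ranges over all size-$k$ subsets of $\{1,\ldots,n\}$. This is a purely combinatorial identity, valid over any commutative ring, and it is standard (obtained by expanding $\det(M_{I,J})$ through multilinearity in its rows and collecting terms according to which $k$-subset of columns of $N$ contributes).

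Once this is in hand, the conclusion is immediate: every generator $\det(M_{I,J})$ of the ideal $\dd_k(M)$ is written as an $A$-linear combination of the elements $\det(N_{I,K})$, each of which lies in $\dd_k(N)$. Hence $\dd_k(M)\subset\dd_k(N)$, as claimed.

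The only real step is establishing Cauchy--Binet; I would either cite it as a classical identity or, if a self-contained argument is preferred, derive it by writing the $i$-th row of $M$ as $\sum_{\ell} N_{i,\ell}\, P_{\ell,-}$, expanding the determinant by multilinearity in the rows, and observing that any term in which two indices among $\ell_1,\ldots,\ell_k$ coincide vanishes, while the remaining terms regroup (after reordering and picking up the appropriate sign) into the stated sum over size-$k$ subsets $K$. There is no real obstacle here; the lemma is essentially bookkeeping on top of Cauchy--Binet. Note, by the same argument applied to $M^T=P^T N^T$, one also obtains $\dd_k(M)\subset\dd_k(P)$, which will be useful later when comparing the invariant factors of $D$ and $D'$.
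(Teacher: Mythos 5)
Your proposal is correct and is essentially the paper's own argument: the paper expands the columns of the $k\times k$ submatrix of $M=NP$ as linear combinations of truncated columns of $N$ and uses multilinearity of the determinant, which is exactly the Cauchy--Binet expansion you invoke. The only cosmetic difference is that you phrase it via rows and cite the formula by name, while the paper carries out the column expansion directly.
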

\begin{proof}
Let $\Delta$ be a $k\times k$ minor of $M=NP$, say the minor of the first $k$ rows and columns (to fix an example). Let $C_i^k$ be the truncated columns of $N$ of size $k$. Then $$\Delta=\det(p_{1,1}C_1^k+\ldots+p_{n,1}C_n^k,\ldots,p_{1,k}C_1^k+\ldots+p_{n,k}C_n^k)$$ where $P=(p_{i,j})$. Here $\Delta$ appears as a linear combination of minors of order $k$ extracted from the first $k$ lines of $N$. This implies that $\Delta$ is an element of $\dd_k(N)$. 
\end{proof}
Since the matrices $P$ and $Q$ are invertibles, by Lemma \ref{div_minors}, we have $\dd_k(D)=\dd_k(D')$ for all $k$. Now, since the matrices $D$ and $D'$ are diagonal it is easy to see that these last two ideals are in fact principal and more precisely : 
$$\dd_k(M)=(d_1\ldots d_k)\quad {\rm and}\quad \dd_k(N)=(d'_1\ldots d'_k)$$ 
Hence, we get $(d_k)=(d'_k)$ for all $k$.
\end{proof}

\section{The main results}
After  setting an abstract background, we will be able to settle our result,  following the main steps of the proof given by Djokovic in \cite{Dj}.\par
In a given ring $A$ that we may think at  as a UFD, let us introduce two conditions. \air The first one concerns the Positivity of Non-Real Irreducibles : 
\air
\PI\quad Any non-real irreducible $q$ in $A$ can be associated to a non-real irreducible which is strictly positive on all $\Spr A$.
\air
Next, we come to the second condition, relative to the Generization of a given Real Irreducible $p$ such that $(p)$ is prime : 

\air 
\GE\quad There is $\alpha\in\Spr A$ whose support is $\supp(\alpha)=(p)$ and $\beta\in\Spr A$ with support $(0)$, such that $\beta$ is a generization of $\alpha$.
\air

Note that the condition \GE  is true with respect to any real prime $(p)$ whenenever $A/(p)$ is regular (confer the proof of Lemma \ref{regular_generization} for this fact). For instance, this condition will be automatically satisfied if $A$ is a PID, since $A/(p)$ is a field in this case.\air

Now, if $A$ is a UFD, then for any irreducible $p\in A$, we may define as usually $\nu_p(a)$ to be the $p$-valuation of an element $a\in A$ to be the maximal integer $k$ such that $p^k$ divides $a$. \par Here is the main result :

\begin{thm}\label{main_pss_smith}
Let $A$ be a regular UFD, $M$ be a symmetric matrix in $A^{n\times n}$ which is positive semi-definite on $\Spr A$. 
Assume that $M$ admits a Smith Normal form, i.e. there are $d_1\vert\ldots\vert d_r$ in $A$, such that $M\sim D=\diag(d_1,\ldots,d_r,0\ldots,0)$. \par Assume furthermore that the ring $A$ satisfies the condition $\PI$ and the condition $\GE$ with respect to any real-irreducible $p$ dividing some $d_k$ and which does change of sign on $\Spr A$.\par
Then, for all $k=1\ldots r$, the element $d_k\in A$ can be associated to an element $d'_k\in A$ which is such that $d'_k>0$ everywhere on $\Spr A$. 
\end{thm}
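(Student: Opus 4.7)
The plan is to follow the approach of \cite{Dj}, exploiting the UFD structure of $A$: factor each $d_k$, up to a unit, as $\prod_i p_i^{e_{i,k}}$ with the $p_i$ pairwise non-associated irreducibles, and then arrange, by an appropriate choice of associates, that each factor is $\geq 0$ on all of $\Spr A$. The irreducibles split into three classes. For a non-real factor, condition $\PI$ supplies an associated strictly positive non-real irreducible. For a real $p_i$ of constant sign on $\Spr A$, either $p_i$ or $-p_i$ is non-negative everywhere and the sign is absorbed into the unit. All the difficulty concentrates in the third class: a real irreducible $p$ dividing some $d_k$ which changes sign on $\Spr A$. For these the aim is to prove that $\nu_p(d_k)$ is even, so that $p^{\nu_p(d_k)}$ is a square and hence non-negative everywhere.

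To establish this parity, localize at $p$. Since $2=1^2+1^2$ is strictly positive everywhere on $\Spr A$, any irreducible divisor of $2$ has constant sign, so the sign-changing $p$ does not divide $2$; hence $2\in A_{(p)}^\times$, and symmetric Gaussian elimination over the DVR $A_{(p)}$ produces a congruence decomposition $M=NDN^T$, with $D=\diag(u_1 p^{a_1},\ldots,u_r p^{a_r},0,\ldots,0)$ and every $u_i\in A_{(p)}^\times$. Since congruence is a particular case of left-right equivalence, Proposition \ref{Unicity_Smith} applied in $A_{(p)}$ forces $\nu_p(d_k)=a_{(k)}$, the $k$-th term of the sorted sequence $(a_i)$. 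Condition $\GE$, exploited together with the DVR structure of $A_{(p)}$ in the spirit of Lemma \ref{regular_generization}, furnishes two generizations $\beta_+,\beta_-\in\Spr K$ of a common $\alpha\in\Spr A$ of support $(p)$, with $p>_{\beta_+}0$ and $p<_{\beta_-}0$. Since $\det N$ is a unit of $A_{(p)}$, the specialisations $\phi_{\beta_\pm}(N)$ remain invertible over the ambient real closed field, so positive semi-definiteness of $M$ at $\beta_\pm$ passes to $D$, giving $u_ip^{a_i}\geq 0$ at both $\beta_+$ and $\beta_-$ for every $i$. If some $a_i$ were odd, the two inequalities would demand opposite signs for $u_i$ at $\beta_+$ and $\beta_-$; but $u_i\notin (p)$ is a unit of $A_{(p)}$, so its sign at any ordering of $K$ lifting $\alpha$ coincides with the sign of the residue $\bar u_i$ under the ordering induced by $\alpha$ on the residue field, and is therefore the same at $\beta_+$ and at $\beta_-$. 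This contradiction forces every $a_i$, hence every $\nu_p(d_k)$, to be even.

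Combining the three cases yields, for each $k$, an element $d'_k$ associated to $d_k$ all of whose irreducible factors are non-negative on $\Spr A$, so that $d'_k\geq 0$ everywhere. The main obstacle is the parity argument in the sign-changing case; it rests on two points beyond the routine bookkeeping of valuations. First, the availability of a symmetric congruence diagonalization over $A_{(p)}$, which is where the observation that a real sign-changing irreducible cannot divide $2$ enters. Second, the production of the pair $(\beta_+,\beta_-)$ of generizations of a common $\alpha$ with opposite signs of $p$: condition $\GE$ by itself only yields one such generization, and one must exploit the DVR structure of $A_{(p)}$ (in the spirit of the proof of Lemma \ref{regular_generization}) to obtain the second, without which the parity of the $a_i$ could not be deduced.
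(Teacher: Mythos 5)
Your proof is correct, but it takes a genuinely different route from the paper's, so let me compare the two. The paper follows Djokovic's argument directly: after reducing to $M=DQ$ with $Q$ invertible and $r=n$, it takes the first index $m$ with $\nu_p(d_m)$ odd and shows --- using the non-negativity of the $1\times 1$ and symmetric $2\times 2$ minors of $DQ$ together with a valuation inequality (Lemma \ref{valuation_real}, itself proved with the same pair of generizations you use) --- that $p$ divides every entry $q_{i,j}$ with $i\leq m\leq j$; Lemma \ref{p_divides_matrix} then forces $p\mid\det Q$, contradicting invertibility. You replace this entry-by-entry divisibility analysis with a congruence diagonalization $M=NDN^T$ over the DVR $A_{(p)}$ (legitimate, as you observe, because $2\notin(p)$ for a real prime, so $2$ is a unit there) followed by the uniqueness of the Smith form (Proposition \ref{Unicity_Smith}) to identify $\nu_p(d_k)$ with the sorted exponents $a_{(k)}$; the parity then drops out of the $1\times1$ positivity alone, evaluated at the two generizations. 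What your route buys is conceptual economy: once the form is diagonal the theorem reduces to the trivial diagonal case and the $2\times2$-minor lemma disappears; the cost is invoking diagonalization of symmetric forms over a local ring with $2$ invertible, a tool the paper avoids. Both proofs ultimately rest on the same two inputs: the existence of generizations $\beta_+,\beta_-$ of a common $\alpha$ with support $(p)$ and opposite signs of $p$ (which, as you rightly flag, is more than the literal statement of \GE and comes from $A_{(p)}$ being a DVR, as in Lemma \ref{regular_generization}; the paper reads this strengthened form into \GE without comment), and the fact that the sign of a $p$-unit at $\beta_\pm$ is determined by its residue at $\alpha$. One last remark: you conclude $d'_k\geq 0$ rather than $d'_k>0$, and that is in fact all that can be true (an even power of a real irreducible vanishes at the points supported on it); the paper's own argument likewise only delivers non-negativity.
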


\begin{proof}
We proceed by several reductions :

\air
$\bullet$ We may assume that $A$ is formally real ($\Spr A\not=\emptyset)$, otherwise there is nothing to do.
\air
$\bullet$ Because of Property \PI, we may assume that for all $k$, $d_k=e_kf_k$ where $e_k$ is a product of real irreducibles and $f_k>0$ on all $\Spr A$.

\air
$\bullet$ If $M=PDQ$ where $P,Q$ are invertible, we may reduce to the case where $P$ is the identity. Indeed, let $M'=DQ'$ where $Q'=Q(P^{-1})^T$ is invertible in $A^{n\times n}$ and $M'=P^{-1}M(P^{-1})^T$ remains symmetric and psd on $\Spr A$. Of course, $M$ and $M'$ have same Smith Normal Form.

\air$\bullet$ We may reduce to the case where $r=n$. Indeed, let $M=DQ$ where $Q$ is invertible, $D$ diagonal, and write $M=\left(\begin{array}{cc}M_1&M_2\\ M_3&M_4\end{array}\right)$, $Q=\left(\begin{array}{cc}Q_1&Q_2\\ Q_3&Q_4\end{array}\right)$, $D=\left(\begin{array}{cc}D_1&0\\ 0&0\end{array}\right)$ with $M_1,Q_1,D_1=\diag(d_1,\ldots,d_r)\in A^{r\times r}$ ; $M_2,Q_2\in A^{r\times (n-r)}$ ; $M_3,Q_3\in A^{(n-r)\times r}$ ; $M_4,Q_4\in A^{(n-r)\times (n-r)}$. We get then 
$$M=\left(\begin{array}{cc}D_1Q_1&D_1Q_2\\ 0&0\end{array}\right),$$ hence $M_3=M_4=0$ and by symmetry $M_2=0$. So, we are reduce to 
 $M_1=D_1Q_1$. Next, remark that $Q_1$ is necessarily invertible. Indeed, by the proof of Proposition \ref{Unicity_Smith}, we have 
 $$(\dd_r(M))=\dd_r(D)=\dd_r(D_1)=d_1\ldots d_r=(\det(M_1)),$$
which shows that $\det(Q_1)$ is invertible.

\air $\bullet$ Assume that the Theorem is not true. So there is an integer $m$ such that $d_m$ is not associated to a positive element on all $\Spr A$. 
Hence, there is a real irreducible $p$ which changes of sign on $\Spr A$ and such that $\nu_p(d_m)$ is odd. We will assume moreover that $\nu_p(d_i)$ is even for all $i\leq m$ and $\nu_p(d_1)\leq\ldots\leq \nu_p(d_m) \leq\ldots\leq   \nu_p(d_n)$.

\par

We claim now that, for all $1\leq i\leq m$ and $m\leq j\leq n$, the entry $q_{i,j}$ of the matrix $Q$ is divisible by $p$.\air

\begin{enumerate}
\item[a)] For $i=j=m$ this follows from the fact that $d_mq_{m,m}\geq 0$ on all $\Spr A$ ($d_mq_{m,m}$ is a $1\times 1$-minor of the positive matrix $DQ$). By condition \GE\, relative to $p$, there is $\alpha\in\Spr A$ with support $(p)$ and $\alpha_+,\alpha_-\in\Spr A$ two generizations of $\alpha$ with support $(0)$, such that $p>_{\alpha_+}0$ and $p<_{\alpha_-}0$. Thus, $\nu_p(q_{m,m})$ shall be odd in order to have $d_mq_{m,m}\geq_{\alpha_-} 0$. 

\item[b)] For $i=m$ and $j>m$, we check that $p\vert q_{i,j}$ is a consequence of the positivity of the following symmetric $2\times 2$-minor of  $DQ$ :
$$\left(\begin{array}{cc}
d_mq_{m,m}&d_mq_{m,j}\\
d_jq_{j,m}&d_jq_{j,j}
\end{array}\right)$$
Indeed, we have the inequality on all $\Spr A$ : 
\begin{equation}\label{eq1}
(d_md_j)(q_{m,m}q_{j,j})-(d_mq_{m,j})^2\geq 0
\end{equation}

At this point, we use the result 
\begin{lem}\label{valuation_real}
Let $A$ be a formally real UFD which is also a regular domain, and $a,b\in A$. Assume that $a-b^2\geq_\alpha 0$ for all $\alpha\in\Spr A$. Then, for all real irreducible $p$, we have 
$$\nu_p(a)\leq \nu_p(b^2)$$ 
\end{lem}
\begin{proof}
Assume that there exists $p\in A$ a real irreducible such that  $$2r+s=\nu_p(a)> \nu_p(b^2)=2r$$ with $r,s\in\N$. We write $a-b^2=p^{2r}(p^sa'-b'^2)$ with $a',b'\in A$ such that $p\nmid a'$ and $p\nmid b'$. By assumption, $p^{2r}(p^sa'-b'^2)\geq_\alpha 0$ for all $\alpha\in\Spr A$.\par Take $\alpha\in\Spr A$ such that $\supp(\alpha)=p$. By \ref{regular_generization}, there is a generization $\beta$ of $\alpha$ in $\Spr A$ such that $\supp(\beta)=(0)$.\par

By assumption, $p^{2r}(p^sa'-b'^2)\geq_\beta 0$, which yields $p^sa'-b'^2\geq_\beta 0$ since $p\notin \supp(\beta)=(0)$. By specialization, we get $p^sa'-b'^2\geq_\alpha 0$, and hence $-b'^2\geq_\alpha 0$. Necessarily, $b'\in\supp(\alpha)$, namely $p\mid b'$ : a contradiction. 
\end{proof}

Using Lemma \ref{valuation_real}, from Equation (\ref{eq1}) we get 

$$
\nu_p(d_{m})+2\nu_p(q_{m,j})\geq \nu_p(d_j)+\nu_p(q_{m,m})+\nu_p(q_{j,j})
$$\air

Since $\nu_p(d_{j})\geq \nu_p(d_{m})$ for $j>m$, it shows that

$$
2\nu_p(q_{m,j})\geq \nu_p(q_{j,j})+\nu_p(q_{m,m})
$$\air

Since $\nu_p(q_{m,m})\geq 1$, we obtain $\nu_p(q_{m,j})\geq 1$, namely $p\vert q_{m,j}$.
\air

\item[c)] For $i<m$ and $j\geq m$, we use the equality $d_iq_{i,j}=q_{j,i}d_j$ and the fact that $\nu_p(d_j)\geq \nu_p(d_m)>\nu_p(d_i)$, to conclude that $p\vert q_{i,j}$ too.
\air
\end{enumerate}

To end, we use the elementary
\begin{lem}\label{p_divides_matrix}
Let $P=(p_{i,j})\in A^{n\times n}$ be a matrix with entries in a domain $A$. Assume that there is an irreducible $p$ such that $p$ divides $m_{i,j}$ for all $1\leq i \leq r$ and $r\leq j \leq n$, with $r\in\N$.\par Then, $p$ divides $\det(P)$.
\end{lem}
\begin{proof}
We proceed by induction on $r$. If $r=1$, then the result is obvious. \par
Next, if $r>1$, we developp according to the last row and we find that $\det(P)$ is a linear combination of determinants which are all divisible by $p$ by the induction hypothesis.  
\end{proof}
By Lemma \ref{p_divides_matrix}, the irreducible $p$ divides $\det(Q)$ although $Q$ is supposed to be invertible in $A$ : a contradiction which concludes the proof.
\end{proof}

Of course, when $A$ is a Principal Ideal Domain, then $A$ is an UFD and moreover satisfies condition \GE. 
 Moreover, the Smith Normal form of a matrix always exists, so we are able to present a shorter version of Theorem \ref{main_pss_smith} under the assumption that the ring is principal. 
\begin{thm}\label{main_pss_PID}
Let $A$ be a PID and $M$ be a symmetric matrix in $A^{n\times n}$ which is positive semi-definite on $\Spr A$. Let $M\sim D=\diag(d_1,\ldots,d_r,0\ldots,0)$ with $d_1\vert\ldots\vert d_r$ in $A$ be the Smith Normal Form of $M$.
We assume furthermore that the ring $A$ satisfies the condition $\PI$. \par
Then, up to association, all the $d_k$'s are positive on $\Spr A$. 
\end{thm}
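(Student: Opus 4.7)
The plan is simply to verify that all the hypotheses of Theorem \ref{main_pss_smith} are satisfied in the PID setting, so that Theorem \ref{main_pss_PID} follows as an immediate specialization. Four conditions need to be checked: that $A$ is a regular UFD, that $M$ admits a Smith Normal Form, that \PI holds, and that \GE holds for every real irreducible $p$ dividing some $d_k$.

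First, any PID is a UFD, and it is regular because it has Krull dimension at most one with DVR localizations at every nonzero prime. Second, the existence of the Smith Normal Form for matrices over a PID is the classical theorem recalled at the start of Section 3. The condition \PI is assumed in the hypothesis.

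The only remaining point is \GE. Here I would use the observation already made by the author just after the statement of \GE: in a PID the quotient $A/(p)$ by a real irreducible is a field, hence in particular regular, so Lemma \ref{regular_generization} applies and guarantees, for any $\alpha \in \Spr A$ with $\supp(\alpha) = (p)$, the existence of generizations $\alpha_{+}, \alpha_{-} \in \Spr K$ with $p >_{\alpha_{+}} 0$ and $p <_{\alpha_{-}} 0$. In particular one obtains a generization with support $(0)$, which is precisely what \GE requires.

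With all hypotheses of Theorem \ref{main_pss_smith} verified, we conclude that each $d_k$ is associated to an element which is strictly positive on $\Spr A$, which yields the statement. There is no real obstacle here; the content lies entirely in Theorem \ref{main_pss_smith}, and the work of this corollary is only to observe that the \GE hypothesis becomes automatic in the principal case.
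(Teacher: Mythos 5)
Your proposal is correct and matches the paper's own treatment: the paper derives Theorem \ref{main_pss_PID} from Theorem \ref{main_pss_smith} by exactly the same observations (a PID is a UFD, the Smith Normal Form always exists, and $\GE$ holds because $A/(p)$ is a field, hence regular, so Lemma \ref{regular_generization} applies). Nothing is missing.
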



\remark\label{vs_ex}{According to the proof of Theorem \ref{main_pss_smith}, if we search for a counterexample to Question \ref{question}, we may focus on the case $n=2$. Namely, take
$$M=\left(\begin{array}{cc}
ad_1&bd_1e_1\\
bd_1e_1&cd_1e_1
\end{array}\right)=\left(\begin{array}{cc}
d_1&0\\
0&d_1e_1
\end{array}\right)\left(\begin{array}{cc}
a&be_1\\
b&c
\end{array}\right)$$
Where $$ac-b^2e_1=\epsilon,\quad \epsilon\in A^*$$

The symmetric matrix $M$ will be positive semi-definite if and ony if we have on all $\Spr A$ :  
$$\left\{\begin{array}{cc}
ad_1&\geq 0\\
cd_1e_1&\geq 0\\
d_1^2e_1&\geq 0\\
\end{array}\right.$$
So, to get a couterexample we will search for an element $d_1\in A$ which change of sign on $\Spr A$ and compatible with all the previous conditions.
} 

\endremark
\air

\section{On the conditions $\GE$ and $\PI$}
\subsection{Condition $\GE$}
We will not discuss very much this rather technical condition because it will be automatically satisfied for the class of rings we are mainly interested in. Indeed, if $A$ is principal, for any irreducible $p$, the ring $A/p$ is regular. The analogeous observation will be also valid when $A$ is a Dedekind domain (confer section \ref{Dedekind}). 

\subsection{Condition $\PI$}
We may note first that if the ring $A$ is not formally real then the condition is obviously satisfied, but Theorem \ref{main_pss_smith} has not any interest !\par
The next class of rings for which the condition is easily seen to be true is given by the following :  

\begin{prop} Condition $\PI$ is satisfied whenever the invertibles of $A$ separate the closed opens of $\Spr A$.
\end{prop}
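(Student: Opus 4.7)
The plan is to start with a non-real irreducible $q$, show that it carries a well-defined sign at every point of $\Spr A$, and then use the separation hypothesis to produce a unit of matching sign whose product with $q$ is everywhere positive.

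First, I would check that $q$ vanishes nowhere on $\Spr A$. If some $\alpha \in \Spr A$ had $q \in \supp(\alpha)$, the real prime $\mathfrak{p} = \supp(\alpha)$ would contain the non-real prime $(q)$; then $A/\mathfrak{p}$, being a quotient of the non-formally-real ring $A/(q)$, would still satisfy $-1 = \sum \mathrm{squares}$, contradicting the formal reality of $A/\mathfrak{p}$. Consequently the basic open sets
\[ U_+ = \{\alpha \in \Spr A \mid q(\alpha) > 0\}, \qquad U_- = \{\alpha \in \Spr A \mid q(\alpha) < 0\} \]
are disjoint and cover $\Spr A$, so each is a closed open subset.

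Next, applying the separation hypothesis to this complementary clopen pair, I would obtain a unit $u \in A^*$ with $u >_\alpha 0$ for $\alpha \in U_+$ and $u <_\alpha 0$ for $\alpha \in U_-$; in other words, $\sgn[u]$ and $\sgn[q]$ agree pointwise on $\Spr A$. The product $uq$ is then strictly positive on all of $\Spr A$, and being associated to $q$ (multiplication by the unit $u$) it remains a non-real irreducible. This is exactly what condition $\PI$ demands.

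I do not foresee a serious obstacle here: the main content is that the non-reality of $(q)$ forces $q$ to avoid all supports in $\Spr A$, and once this is granted the conclusion is a direct application of the separation hypothesis. The one point worth flagging is the precise reading of \emph{separating the closed opens}: it should be strong enough to produce, for each closed open $U$ of $\Spr A$, a unit positive exactly on $U$ and negative on its complement, which is the natural formulation for the quasi-compact space $\Spr A$.
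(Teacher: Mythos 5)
Your proof is correct and follows essentially the same route as the paper: both arguments rest on the observation that a non-real irreducible $q$ cannot lie in any support in $\Spr A$, so that $\{q>0\}$ and $\{q<0\}$ form a clopen partition to which the separation hypothesis applies, yielding a unit $u$ with $uq>0$ everywhere. The only difference is cosmetic --- the paper phrases the clopen partition as a union of connected components on which $q$ has constant sign, whereas you work directly with the sign loci and, usefully, make explicit the non-vanishing of $q$ that the paper leaves implicit.
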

\begin{proof}
Let $\Spr A=\cup_{i\in I}W_i$ be the decomposition of $\Spr A$ into its connected components. Let  $q$ be a non-real irreducible whose sign is $\sgn[q](W_j)=\epsilon_j=+1$ for any $j\in J$ and $\epsilon_j=-1$ for $j\in I\setminus J$. Set $P=\cup_{i\in J}W_i$ and $N=\cup_{i\in I\setminus J}W_i$, then $\Spr A=N\cup P$ is a partition of $\Spr A$ into two closed opens. Thus, by assumption there is an invertible $u\in A$ such that $u>0$ on $P$ and $u<0$ on $N$, hence $uq>0$ on all $\Spr A$.
\end{proof}

As an esay corollary, $\PI$ appears to be true for any ring whose real spectrum $\Spr A$ is connected, since in this case a non-real irreducible does not change of sign.

\air
Note moreover that 
\begin{prop}\label{localization} The condition $\PI$ is stable under localization.
\end{prop}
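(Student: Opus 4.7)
The plan is to transfer the positivity witness across the localization map $A \to B := S^{-1}A$, using that irreducibles and real spectra behave well under localization for UFDs.

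First I would set up the correspondence between irreducibles. Since $A$ is (thought of as) a UFD, so is $B$, and every irreducible $q' \in B$ is associated in $B$ to some irreducible $q \in A$ with $(q) \cap S = \emptyset$; conversely, an irreducible $q \in A$ either becomes a unit in $B$ (when $q$ divides some element of $S$) or remains irreducible there. Likewise, two elements of $A$ which are associated in $A$ remain associated in $B$. So given a non-real irreducible $q' \in B$, pick a representative $q \in A$ (irreducible, with $(q)\cap S=\emptyset$) associated to $q'$ in $B$.

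Next I would verify that non-realness transfers between $A$ and $B$. The point is that $B/(q') = S^{-1}(A/(q))$ and $A/(q)$ is a domain sharing the same fraction field as this localization. Hence one is formally real iff the other is: indeed an ordering on the domain $A/(q)$ extends to its fraction field and restricts to any localization, and conversely a sum-of-squares representation of $-1$ in $A/(q)$ survives in $S^{-1}(A/(q))$. So $q'$ is non-real in $B$ iff $q$ is non-real in $A$. Applying \PI in $A$, we obtain $p \in A$, a non-real irreducible of $A$ associated to $q$ in $A$, with $p >_\alpha 0$ for every $\alpha \in \Spr A$.

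Finally I would push this witness back into $B$. Since $p$ and $q$ are associated in $A$, they are associated in $B$, so $p$ is associated in $B$ to $q'$; in particular $p$ is a non-unit of $B$, hence an irreducible of $B$, and it is non-real by the previous paragraph. For positivity, use the canonical restriction map $\Spr B \to \Spr A$ induced by $A \to B$: any ring-morphism $\phi : B \to R$ into a real closed field restricts to $\phi|_A : A \to R$, and under this restriction the image of $p \in A$ is the same element of $R$. Thus $p >_\beta 0$ for every $\beta \in \Spr B$, which is exactly the required property for $q'$.

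No step looks truly hard, but the one place where care is needed is the biconditional "$q'$ non-real in $B$ $\iff$ $q$ non-real in $A$" — the $\Leftarrow$ direction is immediate but the $\Rightarrow$ direction relies on the domain structure of $A/(q)$ to conclude that formal realness is preserved under its localization $S^{-1}(A/(q))$, which would fail for general quotients. Everything else is functoriality of $\Spr$ and the elementary arithmetic of associates in a localization of a UFD.
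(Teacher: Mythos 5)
Your argument is correct and follows essentially the same route as the paper's (one-line) proof: identify the non-real irreducibles of $S^{-1}A$ with non-real irreducibles of $A$ up to association, and transfer positivity via the restriction $\Spr(S^{-1}A)\to\Spr A$. Your added care about why non-realness of $(q)$ passes to $(q')$ — via the common fraction field of the domain $A/(q)$ and its localization — fills in exactly the detail the paper leaves implicit.
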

\begin{proof}
It suffices to see that the non-real irreducibles of $S^{-1}A$ are in one-to-one correspondance with the product of non-real irreducibles by some element of $S$, and that the correspondance which associates $\alpha\in\Spr (S^{-1}A)$ to $\alpha\in\Spr A$ such that $\supp(\alpha)\cap S=\emptyset$ is also one-to-one.  
\end{proof}

For instance, the coordinate ring of the real hyperbola $\R[x,y]/(xy-1)$ satisfies $\PI$\!. 

\remark{
Let $A$ be a principal ring. It follows from  Proposition \ref{localization} that if $A$ satisfies condition \PI, then $A_\p$ satisfies condition $\PI$ for all prime $\p$ in $A$. Beware that the converse is false. For intance, \ref{vsex1} gives a counterexample. 
}

\remark{The condition $\PI$ is closely related to the so-called {\it change of sign criterion} (see for instance \cite[Th\'eor\`eme 4.5.1]{BCR}) which says the following : \par
Let $R$ be a real closed field and $f$ an irreducible polynomial in $R[x_1,\ldots,x_n]$. Then, the ideal $(f)$ is real if and only if the polynomial $f$ changes of sign in $R^n$ : $(\exists x,y\in R^n\quad f(x)f(y)<0)$.\par
Indeed, the obvious implication of the equivalence gives condition $\PI$ for the ring $R[x_1,\ldots,x_n]$ : if  $f$ is a non-real irreducible, then $f$ does not change of sign (here the invertibles are elements in $R^*$, of constant sign).
}
\endremark

We may naturally extend this last property to any ring of polynomials over a non-necessarily real-closed field.

\begin{prop}
Let $A=k[x_1,\ldots,x_n]$ where $k$ is a formally real field. Then, the ring $A$ satisfies condition $\PI$.
\end{prop}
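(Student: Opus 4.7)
The plan is to use the change of sign criterion over a real closure of $k$, and to produce the required unit as $u := q(0)$. First, any non-real irreducible $q \in A = k[x_1,\ldots,x_n]$ satisfies $q(0) \ne 0$: otherwise $(q) \subset \mathfrak{m}_0 := (x_1,\ldots,x_n)$, forcing the formally real residue field $A/\mathfrak{m}_0 = k$ to be a quotient of the non-formally-real ring $A/(q)$, which is impossible. Thus $u := q(0) \in k^{\times}$ is a unit of $A$, and I will show that $q/u > 0$ on all of $\Spr A$.

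Given $\alpha \in \Spr A$, let $\phi : A \to R_\alpha$ be the corresponding morphism into a real closed field, let $\leq$ be the restriction of $\leq_\alpha$ to $k$, and let $R \subset R_\alpha$ be the real closure of $(k,\leq)$ inside $R_\alpha$ (unique by uniqueness of real closure). Factor $q = q_1 \cdots q_s$ into distinct irreducibles in $R[x_1,\ldots,x_n]$; distinctness follows from the separability of $q$, since $k$ has characteristic zero. The Chinese remainder decomposition $R[x_1,\ldots,x_n]/(q) \cong \prod_i R[x_1,\ldots,x_n]/(q_i)$ inherits non-formal-realness from $k[x_1,\ldots,x_n]/(q)$ via base change, and a finite product of rings is non-formally-real only when every factor is, so each $q_i$ is a non-real irreducible of $R[x_1,\ldots,x_n]$. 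By the change of sign criterion invoked in the preceding remark, each $q_i$ satisfies $\eta_i q_i \geq 0$ on $R^n$ for some $\eta_i \in \{\pm 1\}$, hence $\eta q \geq 0$ on $R^n$ with $\eta := \prod_i \eta_i$.

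Evaluating at the point $0 \in R^n$ (where $q$ is nonzero by the first observation) gives $\sgn[u](\alpha) = \eta$. Since the statement ``$\forall x_1,\ldots,x_n,\ \eta q(x_1,\ldots,x_n) \geq 0$'' is first-order in the language of ordered fields, it transfers by Tarski--Seidenberg to the real closed extension $R_\alpha \supset R$, yielding $\sgn[q](\alpha) = \sgn\bigl(q(\phi(x_1),\ldots,\phi(x_n))\bigr) = \eta$; this value is nonzero because $q \notin \supp(\alpha)$, again by non-reality of $(q)$. Therefore $\sgn[q/u](\alpha) = \eta/\eta = +1$, so $q/u$ is strictly positive on $\Spr A$, proving $\PI$.

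The delicate point is the transfer from the classical setting to the spectral one: the change of sign criterion directly controls the sign of $q$ only on the classical points $R^n$, whereas $\Spr A$ also contains morphisms into arbitrary real closed extensions of $k$. Tarski--Seidenberg lifts the constant-sign property to all such points, and the choice $u = q(0)$ ensures that $\sgn[u]$ matches the sign of $q$ uniformly, ordering by ordering on $k$.
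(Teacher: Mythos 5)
Your argument is sound in outline and takes a genuinely different route from the paper. The paper normalizes $q$ by its leading coefficient (for the lexicographic order on monomials) and determines the constant sign of $\phi(q)$ directly: a sign change on $R^n$ would produce a zero, hence a ring homomorphism $k[x_1,\ldots,x_n]/(q)\to R$, contradicting non-reality; the sign is then read off from the leading monomial by letting $x_1\gg\cdots\gg x_n$ tend to $+\infty$. You instead normalize by the value at the origin (your observation that $q(0)\in k^{\times}$ is correct and is a nice touch) and import the constant-sign property from the change of sign criterion applied over the real closure $R$ of $(k,\leq_\alpha|_k)$, then transfer to $R_\alpha$ by model completeness. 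Your route avoids the paper's somewhat informal asymptotic analysis at infinity, at the price of invoking the nontrivial change of sign criterion and a factorization over $R$; the paper's route is more self-contained.

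One step is wrong as written, though the conclusion it serves is easily recovered. For $n\geq 2$ the distinct irreducible factors $q_1,\ldots,q_s$ of $q$ over $R$ need not generate comaximal ideals (take $q_1=x_1$, $q_2=x_2$), so there is no Chinese remainder isomorphism $R[x_1,\ldots,x_n]/(q)\cong\prod_i R[x_1,\ldots,x_n]/(q_i)$. What you actually need, namely that each $(q_i)$ is non-real, follows more simply: $(q)\subset(q_i)$ gives a surjection $R[x_1,\ldots,x_n]/(q)\twoheadrightarrow R[x_1,\ldots,x_n]/(q_i)$, and the image of a representation of $-1$ as a sum of squares in the source is such a representation in the target. (That $-1$ is a sum of squares in $R[x_1,\ldots,x_n]/(q)$ at all follows, as you say, by pushing forward such a representation from $k[x_1,\ldots,x_n]/(q)$.) With that repair, and granting the standard facts you cite (squarefreeness is preserved under base change in characteristic zero, existence of the real closure of $(k,\leq)$ inside $R_\alpha$, Tarski--Seidenberg), the proof is complete.
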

\begin{proof}
Start with the case of a single variable : $A=k[x]$, where $k$ is a formally real field. Let $p(x)$ be an irreducible polynomial in $k[x]$ which is non real. Up to association, we may assume that $p$ is monic. Let $\phi:A\rightarrow R$ be a ring-morphism into a real closed field $R$. Since $R$ is real closed, $p$ cannot change of sign in $R$, otherwise by continuity it would vanish on $R$ : a contradiction with the fact that $p$ is non real. Since $\lim_{x\to +\infty}\phi(p(x))=+\infty$, we get for all all morphism $\phi:A\rightarrow R$ into a real closed field $R$ and all $x\in R$, $\phi(p(x))>0$. In other words, $p>0$ on all $\Spr A$.
\par

To generalize the argument to $A=k[x_1,\ldots,x_n]$, let us order all the monomials with respect to the lexicographic ordering. Let $m(x)=\lambda x_1^{\alpha_1}\ldots x_n^{\alpha_n}$ be the higher monomial appearing in the polynomial $p(x)$. Up to association, we may assume that $\lambda=1$. Then, we look at the element $\phi(p(x))$ for a ring-morphism $A\rightarrow R$ with $R$ real closed. If we make all $x_i$'s tend to $+\infty$ such that all successive quotients $\frac{x_i}{x_{i+1}}$ tend also to $+\infty$ (i.e. $x_1\gg x_2\gg \ldots \gg x_n$), the we get
$$\phi(p(x_1,\ldots,x_n))\sim \phi(m(x_1,\ldots,x_n)).$$ Then, we conclude as previousy that $\phi(p(x_1,\ldots,x_n))>0$ for any substitution $(x_1,\ldots,x_n)\in R^n$. In other words, $p(x)>0$ on all $\Spr A$.
\end{proof}

For instance the property $\PI$ is satisfied in $\Q[x_1,\ldots,x_n]$ although the invertibles do not separate the closed opens of $\Spr A$.

\remark{
We shall mention also the link of this section with the content of \cite{Ma}. Roughly speaking, Marshall generalizes a separation result due to Schwartz in the geometric case, introducing a condition involving local $4$-elements fans. This last condition is empty in the one-dimentional geometric case, namely when $A=\R[V]$ is the ring of coordinates of an real affine plane curve. So it is possible to separate the connected components of $\Spr A$ (or equivalently those of $V$ as a variety) by polynomials. \par But, for our purpose, it does not say whether the polynomials can be taken invertible.  
} 
\endremark

In the next section, we study for which rings of number fields  the condition $\PI$ is satisfied.\air

\section{Rings of integers of number fields}\label{number_fields}

Let $K$ be a finite extension of $\Q$ of degree $n$. 
Write $K=\Q[x]/m(x)$ where $m(x)$ is an irreducible polynomial of degree $n$ over $\Q$. Denote by $a_1,\ldots,a_n$ all the roots of $m(x)$ in $\C$.
We say that $K$ is {\it totally real} if all the roots of $m(x)$ are real. A number field is totally real if and only if it can be embedded into $\R$. 

\air
Let $A$ be the ring of integers of $K$ over $\Z$. 
We define $N(a)$, the norm of an element in $A$, to be the integer $N(a)=\prod_{\phi}\phi(a)$, where $\phi$ runs the set of all the ring-homomorphisms $\phi:K\rightarrow \C$.

\begin{prop}\label{SPR_number_field}
Let $A$ be the ring of integers of a degree $n$ number field $K=\Q[x]/m(x)$. Then, $\Spr A=\Spr K$ and, as a set, it consists in $r$ points, where $r$ is the number of real roots of $m(x)$.
\end{prop}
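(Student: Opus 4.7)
The plan is to separate the proposition into two independent claims: first, that $\Spr A$ collapses to $\Spr K$ because every nonzero prime of $A$ is excluded as a support; second, that $\Spr K$ is parametrized exactly by the real roots of $m(x)$.

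For the first step, I would observe that $A$, being the ring of integers of a number field, is a Dedekind domain and moreover finitely generated as a $\Z$-module. Every nonzero prime $\p\subset A$ is therefore maximal, and $A/\p$ is a finite extension of $\Z/(\p\cap \Z)$, hence a finite field. Since no finite field is formally real (in any finite field $-1$ is a sum of squares), no nonzero prime of $A$ is real. Every $\alpha\in\Spr A$ must therefore satisfy $\supp(\alpha)=(0)$, which gives the identification $\Spr A=\Spr K$.

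For the second step, I would use the description of points of $\Spr K$ as ring-morphisms $\phi:K\rightarrow R$ into a real closed field $R$, taken modulo the equivalence identifying morphisms that induce the same ordering on $K$. Since $K=\Q[x]/(m(x))$, such a $\phi$ is determined by $\phi(x)$, which must be a root of $m(x)$ lying in the real closure of $\Q$ embedded in $R$. Thus $\phi$ corresponds to a real root of $m(x)$, and conversely each real root $a_i$ yields an embedding $K\hookrightarrow \R$ sending $x\mapsto a_i$. The delicate point, which I expect to be the main obstacle, is checking that distinct real roots induce genuinely distinct orderings rather than two presentations of the same one; this is resolved by the density of $\Q$ in $\R$, since for $a_i\neq a_j$ any rational $q$ strictly between them makes $x-q$ take opposite signs under the two embeddings, so the induced orderings on $K$ must differ.
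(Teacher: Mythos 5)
Your proposal is correct and follows the same two-step decomposition as the paper: first show every point of $\Spr A$ has support $(0)$, then identify $\Spr K$ with the real roots of $m(x)$. The only real difference is in how the first step is carried out. The paper invokes a classification of the prime ideals of $\Z[x]$ (every prime has the form $(p,f(x))$) and then notes that once a rational prime $p$ lies in $\p$, the identity $-1=p-1$ exhibits $-1$ as a sum of $p-1$ squares in $A/\p$. You instead use that $A$ is integral over $\Z$ and finitely generated as a $\Z$-module, so every nonzero prime meets $\Z$ nontrivially and has finite residue field, and finite fields are never formally real. Both arguments reduce to the same underlying fact (residue fields at nonzero primes have positive characteristic), but your route avoids the auxiliary lemma on $\Z[x]$ and is arguably cleaner. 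In the second step you also make explicit a point the paper passes over in silence: that distinct real roots induce distinct orderings on $K$, which you settle by choosing a rational $q$ strictly between two real roots so that $x-q$ changes sign; this is a worthwhile addition, since the paper's phrase ``can be identified with one root of $m(x)$'' tacitly assumes this injectivity.
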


\begin{proof}
A point of $\Spr A$ is given by a morphism $\phi:A\rightarrow R$ into a real closed field $R$. In order to describe $\Spr A$, we need as a prerequisite the classical description of the ideals in $\Z[x]$ : 

\begin{lem}\label{ideal}
Any prime ideal $\p$ of $\Z[x]$ has the form $\p=(p,f(x))$ where $p$ is a prime number in $\Z$ and $f(x)$ a polynomial in $\Z[x]$ whose reduction modulo $p$ is irreducible in $\Z/p\Z[x]$. 
\end{lem}

Now let $\p$ be a prime ideal in $A$, viewed as an ideal of $\Z[x]$ containing $m(x)$. If $p\in\p$ for a prime number $p\in\Z$, then $-1=p-1$ in $A/\p$ and $-1$ is a sum of  squares, in other word $A/\p$ is not formally real. As a consequence, any $\alpha\in\Spr A$ has support $\supp(\alpha)=(0)$ since by \ref{ideal} the ideal $\p$ shall be generated by an irreducible polynomial in $\Z[x]$ which have to divide $m(x)$. Hence $\Spr A=\Spr K$. \par
Moreover, an element of $\Spr K$ is determined by a morphism $\phi : K\rightarrow R$ where $R$ is a real closed field, hence can be identified with one root of $m(x)$.
\end{proof}

We shall note that if $K$ is Galois of degree $n$ over $\Q$, then $\Spr A$ consists in $n$ points in case $A$ is totally real, otherwise $\Spr A=\emptyset$.\par 
Start with the simplest examples of number fields :

\subsection{Quadratic number fields}
A quadratic number field has the form $K=\Q(\sqrt{d})$, where $d$ is square free in $\Z$. Recall that if $d\not\equiv 1\mod 4$, then the ring of integers of $K$ is $A=\Z[\sqrt{q}]=\Z[x]/(x^2-d)$, whereas if $d\equiv 1\mod 4$, then $A=\Z\left[\frac{1+\sqrt{q}}{2}\right]$. \air

As an application to Proposition \ref{SPR_number_field}, $\Spr A\not=\emptyset$ if and only if $d\geq 0$ and, we say in this case that $K$ is a real quadratic number field.

In summary, the real spectrum of $A=\Z[x]/(x^2-d)$ consists into two different points which can be seen as the two possible embeddings of $A$ into $\R$ : the first one given by sending $x$ onto $\sqrt{d}$ and the second one   by sending $x$ onto $-\sqrt{d}$.\air

About the units of a number quadratic field, it is well known (see for instance \cite{Co2}) that the group of units $A^*$ is isomorphic to $\Z/2\Z\times\Z$. We call $u$ a {\it fondamental unit} in $A$ if its image by the previous isomorphism can be written $(\pm 1,\pm 1)$.
\begin{prop}\label{PI_quadratic} Let $A$ be the ring of integers of a real quadratic number field. We assume that $A$ is principal. Then, $A$ satisfies conditon $\PI$ if and only if $N(u)=-1$, where $u$ is a fondamental unit in $A$.
\end{prop}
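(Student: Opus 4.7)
The plan is to reduce condition $\PI$ to a combinatorial statement about sign patterns achievable by units, and then to treat the two cases $N(u)=\pm 1$ separately.

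First, I would use Proposition \ref{SPR_number_field}: $\Spr A$ is exactly the two points given by the real embeddings $\sigma_1,\sigma_2:K\hookrightarrow \R$. Any $a\in A\setminus\{0\}$ thus has a sign pattern $(\sgn \sigma_1(a),\sgn\sigma_2(a))\in\{\pm\}^2$, whose product is $\sgn N(a)$. Next I would note that every nonzero prime $\p$ of $A$ is non-real, since $A/\p$ is a finite field (hence of positive characteristic, hence not formally real). Consequently every irreducible of $A$ is non-real, and $\PI$ simplifies to the demand that for every irreducible $q$ there exists a unit $v$ with sign pattern of $vq$ equal to $(+,+)$.

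Since multiplication by $v$ shifts sign patterns coordinatewise, the whole question reduces to: which sign patterns are realized by elements of $A^\ast=\{\pm 1\}\times\langle u\rangle$? Assume without loss of generality that $\sigma_1(u)>0$. Then $u$ has sign pattern $(+,\sgn N(u))$, so:
\begin{itemize}
\item if $N(u)=-1$, the units $\pm u^n$ realize all four patterns $(\pm,\pm)$; so every irreducible can be corrected to $(+,+)$, and $\PI$ holds;
\item if $N(u)=+1$, the units realize only the diagonal patterns $(+,+)$ and $(-,-)$, so an irreducible $q$ can be corrected to $(+,+)$ if and only if $N(q)>0$.
\end{itemize}

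For the converse in the case $N(u)=1$ I must exhibit an irreducible with negative norm. The natural candidate is $\sqrt d\in A$ (which lies in $A$ whether $d\equiv 1\pmod 4$ or not, since in the former case $\sqrt d=2\cdot\tfrac{1+\sqrt d}{2}-1$), and $N(\sqrt d)=-d<0$. Using that $A$ is principal, hence a UFD, I factor $\sqrt d=q_1\cdots q_k$; since $\prod N(q_i)=-d<0$, at least one factor $q_i$ has $N(q_i)<0$, and no unit can turn its sign pattern into $(+,+)$. This contradicts $\PI$. The only delicate point is this last step — producing an element, and then an irreducible divisor, of negative norm — but both are immediate from the explicit computation of $N(\sqrt d)$ and unique factorization. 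The rest of the argument is bookkeeping about signs.
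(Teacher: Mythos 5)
Your proof is correct, and its core mechanism is the same as the paper's: identify $\Spr A$ with the two real embeddings, observe that a unit $v$ acts on sign patterns coordinatewise, and note that $N(u)=-1$ is exactly the condition for the fundamental unit to change sign and hence for $A^\ast$ to realize all four patterns. Where you go beyond the paper is the converse direction. The paper's proof stops at ``otherwise, it does not separate,'' which only shows that no unit changes sign; to conclude that \PI actually \emph{fails} one still needs a non-real irreducible whose sign pattern is non-diagonal, and you supply it: every nonzero prime of $A$ is non-real (finite residue field), $\sqrt d\in A$ has norm $-d<0$, and since in the case $N(u)=+1$ every unit has norm $+1$, multiplicativity of the norm forces some irreducible factor of $\sqrt d$ to have negative norm, hence to change sign incorrigibly. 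That step is exactly what makes the ``only if'' half of the equivalence a theorem rather than an assertion, so your version is the more complete one; the cost is only the small amount of bookkeeping needed to check that $\sqrt d$ lies in $A$ in the case $d\equiv 1 \bmod 4$ and is a nonunit.
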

\begin{proof}
Assume that $d\not\equiv 1\mod 4$. We have $A\simeq\Z[x]/(x^2-d)$, and $\Spr A$ can be described by $\phi:x\mapsto \sqrt{d}$ and $\overline{\phi}:x\mapsto -\sqrt{d}$. Assume that $N(u)=\phi(u)\cdot\overline{\phi}(u)=-1$. Then, the unit $u$ changes of sign onto $\Spr A$, and hence separates the two points of $\Spr A$. Otherwise, it does not separate.\par
If $d\equiv 1\mod 4$, then $A\simeq\Z[x]/(4x^2-4x+1-d)$. We repeat the same argument as in the previous case, this time $\Spr A$ being described by $\phi:x\mapsto \frac{1+\sqrt{d}}{2}$ and $\overline{\phi}:x\mapsto \frac{1-\sqrt{d}}{2}$.
\end{proof}

As examples, mention that $N(u)=+1$ for $d=3,7,6,11,23,\ldots$ whereas $N(u)=-1$ for $d=2,10,26,\ldots$\air
In view of applying Theorem \ref{main_pss_PID}, we recall that the rings $\Z[\sqrt{2}]$, $\Z[\sqrt{3}]$ and $\Z[\sqrt{7}]$ are principal. And moreover, it is conjectured that there are infinitly many rings of quadratic numbers fields which are principal. 

\par
Since condition \PI is not satisfied by $A=\Z[\sqrt{3}]$ (according to \ref{PI_quadratic}), the first counterexample we give to Question \ref{question} will be the following one :
\begin{vsex}\label{vsex1}
In the ring $A=\Z[\sqrt{3}]$, $u=2+\sqrt{3}$ is a fondamental unit  which satisfies $N(u)=+1$, hence $u$ remains always positif on $\Spr A$. Consider the element $q=1+\sqrt{3}$ which obviously changes of sign on $\Spr A$. The equality $$-2=N(q)=(1+\sqrt{3})(1-\sqrt{3})$$ shows that $q$ is irreducible and moreover that it is non-real since we have $$-1\equiv 1^2\mod (1+\sqrt{3}).$$
We have futhermore the identity :
$$(1+\sqrt{3})^2=\frac{1}{2}+\left(r+\frac{\sqrt{3}}{r}\right)^2+\frac{7}{2}-\left(\frac{r^4+3}{r^2}\right)$$
where $r$ is a rational number chosed "close enough" to $\sqrt[4]{3}$ (i.e. in order that  $\frac{7}{2}-\left(\frac{r^4+3}{r^2}\right)$ is a positive rational number, and hence a sum of at most $4$ squares of rational numbers).
This last identity will furnish a counterexample to Question \ref{question}.\air

Indeed, following Remark \ref {vs_ex}, it suffices to set $\epsilon\in A_+^*$, $b=1$, $d_1=a=c=q=1+\sqrt{3}$, $e_1=\left(r+\frac{\sqrt{3}}{r}\right)^2+\frac{7}{2}-\left(\frac{r^4+3}{r^2}\right)$. So we have $e_1\geq 0$ on all $\Spr A$ (it is even a sum of squares) and $q^2=\epsilon+e_1$ with $\epsilon=\frac{1}{2}$ invertible in $A$.\par
We get the matricial equality :

$$M=
\left(\begin{array}{cc}
q&0\\
0&qe_1
\end{array}\right)\left(\begin{array}{cc}
q&e_1\\
1&q
\end{array}\right)$$
And $q$ is a non-real irreducible whose all associates always change of sign in $\Spr A$.

\end{vsex}

We may generalize all this section to any totally real number field. 

\subsection{Totaly real number fields}
Recall that $N(u)=\prod_\sigma\sigma(u)$, where $\sigma$ runs the set of all (real) embeddings $A\rightarrow\R$. 

We recall also some well known result about units of rings of integers of number fields (see for instance \cite{Co2}) :
\begin{thm}
Let $K$ be a  totally real number field of degree $n$ over $\Q$. Denote by $A$ the ring of integers of $K$ and $A^*$ the set of all units in $A$. Then,
\begin{enumerate}
\item[a)] The element $x$ is in $A^*$ if and only if $N(x)=\pm 1$ where $N(x)$ is the norm of $x
$.
\item[b)] We have the isomorphism $A/A_{{\rm tors}}\simeq \Z/2\Z$ (which is isomorphic to the group of all roots of unity in $K$).
\item[c) ] The group $A_{{\rm tors}}$ is free of rank $n-1$.
\end{enumerate}
As a consequence, $A^*\simeq \Z/2\Z\times \Z^{n-1}$.
\end{thm}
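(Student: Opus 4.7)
The plan is to establish the three items in the order stated, with (a) and (b) elementary and (c) being the substantive content (the classical Dirichlet unit theorem). Throughout, I would identify $A^*_{\mathrm{tors}}$ with the roots of unity contained in $K$, and use the $n$ real embeddings $\sigma_1,\ldots,\sigma_n : K \hookrightarrow \R$ guaranteed by total reality.

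For (a), I would exploit multiplicativity of the norm together with the fact that $N$ takes integer values on $A$. If $x \in A^*$ with inverse $y \in A$, then $N(x)N(y) = N(1) = 1$ in $\Z$, forcing $N(x) = \pm 1$. Conversely, if $N(x) = \pm 1$, the identity $N(x) = x \cdot \prod_{\sigma \neq \mathrm{id}} \sigma(x)$ exhibits $x^{-1} = \pm \prod_{\sigma \neq \mathrm{id}} \sigma(x)$, which lies in $A$ since it is an algebraic integer fixed by $\mathrm{Gal}(\bar K/K)$.

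For (b), every torsion unit is a root of unity in $K$; since $K$ embeds in $\R$ by total reality, such a root of unity must be $\pm 1$, whence $A^*_{\mathrm{tors}} \simeq \Z/2\Z$. (I note in passing that the statement in the excerpt, as written, appears to have the roles of $A^*_{\mathrm{tors}}$ and $A^*/A^*_{\mathrm{tors}}$ interchanged in items (b) and (c); the intended content is $A^*_{\mathrm{tors}} \simeq \Z/2\Z$ and $A^*/A^*_{\mathrm{tors}}$ free of rank $n-1$, which does add up to $A^* \simeq \Z/2\Z \times \Z^{n-1}$.)

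For (c), the main obstacle, I would introduce the logarithmic embedding $\ell : A^* \to \R^n$ defined by $\ell(u) = (\log|\sigma_1(u)|, \ldots, \log|\sigma_n(u)|)$. By (a), $\prod_i |\sigma_i(u)| = 1$, so $\ell(A^*)$ lies in the hyperplane $H = \{x \in \R^n : \sum_i x_i = 0\}$, which has dimension $n-1$. One checks that $\ker \ell = A^*_{\mathrm{tors}}$ (an element with $|\sigma_i(u)| = 1$ for all $i$ has a minimal polynomial with bounded integer coefficients, and among the finitely many such elements those forming a group must be roots of unity) and that $\ell(A^*)$ is discrete in $H$ by the same boundedness argument. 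The genuinely hard step is that $\ell(A^*)$ is a \emph{full} lattice in $H$: the standard route applies Minkowski's convex body theorem to well-chosen boxes to produce elements of $A$ of bounded norm that are small in all embeddings save one, then extracts units by pigeonholing among associated ideals whose ideal class repeats. This rank statement is exactly Dirichlet's unit theorem, so in keeping with the authors' own practice of citing \cite{Co2}, I would invoke that reference rather than reproduce the lattice argument. Combining the resulting $A^*/A^*_{\mathrm{tors}} \simeq \Z^{n-1}$ with (b) yields $A^* \simeq \Z/2\Z \times \Z^{n-1}$.
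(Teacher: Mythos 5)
The paper states this theorem without any proof at all --- it is recalled as a known result with a pointer to \cite{Co2} --- so there is no argument of the authors' to compare yours against; your sketch is simply the standard one, and it is correct. Parts (a) and (b) are handled exactly as one should: multiplicativity of the norm plus the identity $x^{-1}=\pm\prod_{\sigma\neq\mathrm{id}}\sigma(x)$ (which lies in $K$ because it equals $N(x)/x$, and is an algebraic integer), and the observation that a root of unity in a subfield of $\R$ is $\pm 1$. For (c) you correctly reduce to Dirichlet's unit theorem via the logarithmic embedding into the trace-zero hyperplane, identify the kernel with the torsion by Kronecker's boundedness argument, and defer the Minkowski lattice-fullness step to the same reference the authors cite; that division of labour matches the paper's own practice and is entirely reasonable for a recalled classical fact. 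You were also right to flag that the statement as printed is garbled: items (b) and (c) should read $A^*_{\mathrm{tors}}\simeq\Z/2\Z$ and $A^*/A^*_{\mathrm{tors}}$ free of rank $n-1$ (a torsion group cannot be free of positive rank), and only with that correction does the conclusion $A^*\simeq\Z/2\Z\times\Z^{n-1}$ follow. No gaps.
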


Recall that $\Spr A$ consists in $n$ distincts points which we denotes by $\alpha_1,\ldots,\alpha_{n}$. \par
Let ${\mathcal A}(\Spr A,\{-1,+1\})$ be the set of all maps from $\Spr A$ into $\{-1,+1\}$ and in order to identify the maps $f$ and $-f$ we introduce the quotient ${\mathcal A}={\mathcal A}(\Spr A,\{-1,+1\})/\{-1,+1\}$. Consider the map : 
$$\begin{array}{cccc}{\rm Sgn}:&A_{{\rm tors}}&\rightarrow& {\mathcal A}\\
&u&\mapsto& (\alpha\in\Spr A \mapsto\sgn[u]({\alpha})
\end{array}$$
An equivalent point of view would be to take for ${\mathcal A}$ the set of all functions $f$ satisfying $f(\alpha_n)=+1)$. Then, in place of the previous application Sgn, we would consider the following :
$$\begin{array}{cccc}{\rm Sgn}:&A_{{\rm tors}}&\rightarrow& {\mathcal A}(\Spr A\setminus\{\alpha_n\},\{-1,+1\})\\
&u&\mapsto& (\alpha\in\Spr A \mapsto\sgn[u]({\alpha})
\end{array}$$

Here is the generalization of \ref{PI_quadratic} :

\begin{prop}
Let $A$ be the ring of integers of a totaly real number field of degree $n$ over $\Q$. We assume that $A$ is principal.  Then, the ring $A$ satisfies the condition $\PI$ if and only if the application ${\rm Sgn}$ is an isomorphism of $\Z$-modules. i.e. 
we may choose a basis $(u_1,\ldots,u_{n-1})$ of $A_{{\rm tors}}$ such that $\sgn[u_j]({\alpha_i})=-1$ if and only if $i=j$.
\end{prop}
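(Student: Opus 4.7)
The plan is to reformulate condition $\PI$ as a statement about the image of ${\rm Sgn}$, then use the fact that $A$ is a PID together with the density of $\sigma(K)$ in $\R^n$ to conclude. By Proposition \ref{SPR_number_field}, $\Spr A=\{\alpha_1,\ldots,\alpha_n\}$ corresponds to the real embeddings of $K$, and since every nonzero prime of $A$ has a finite (hence non-formally-real) residue field, every irreducible of the PID $A$ generates a non-real prime. Thus $\PI$ asserts precisely that for every irreducible $q\in A$ there is $u\in A^*$ with $\sgn[uq](\alpha_i)=+1$ for all $i$, equivalently $\sgn[u](\alpha_i)=\sgn[q](\alpha_i)$ for all $i$. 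If $S:A^*\to\{\pm 1\}^n$ is the signature $v\mapsto (\sgn[v](\alpha_i))_i$ and $\mathcal{A}=\{\pm 1\}^n/\{\pm 1\}$, this reads $[S(q)]\in {\rm Im}({\rm Sgn})$ for every irreducible $q\in A$.

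One implication is immediate: if ${\rm Sgn}$ is surjective, pick for each irreducible $q$ a $u\in A_{\rm tors}$ with $[S(u)]=[S(q)]$, so that either $uq$ or $-uq$ is totally positive, verifying $\PI$. For the converse, I assume $\PI$ and aim to show $S(A^*)=\{\pm 1\}^n$. Given $\epsilon\in\{\pm 1\}^n$, the image $\sigma(A)\subset\R^n$ of $A$ under the product of real embeddings is a rank-$n$ lattice, so $\sigma(K)=\Q\cdot\sigma(A)$ is dense in $\R^n$ and meets the open orthant of sign $\epsilon$; clearing denominators produces $a\in A$ with $S(a)=\epsilon$. Factoring $a=u\pi_1\cdots\pi_r$ in the PID $A$ with $u\in A^*$ and $\pi_j$ irreducible, and invoking $\PI$ to write $\pi_j=u_j\pi_j'$ with $\pi_j'$ totally positive, one has $\epsilon=S(a)=S(u\prod_j u_j)\in S(A^*)$. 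Since ${\rm Sgn}$ factors through $A_{\rm tors}/2A_{\rm tors}\simeq(\Z/2\Z)^{n-1}$ and targets $\mathcal{A}\simeq(\Z/2\Z)^{n-1}$, surjectivity of ${\rm Sgn}$ forces the induced map modulo $2$ to be an isomorphism.

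It remains to upgrade this to the basis statement. Identify $\mathcal{A}$ with $(\Z/2\Z)^{n-1}$ via the standard basis $(e_j)$, where $e_j$ is the class of the function sending $\alpha_i$ to $-1$ if $i=j$ and to $+1$ otherwise. Pick any $\Z$-basis $(v_1,\ldots,v_{n-1})$ of $A_{\rm tors}$ and let $T\in GL_{n-1}(\Z/2\Z)$ be the matrix with ${\rm Sgn}(v_j)=\sum_i T_{ij}e_i$ (invertible because ${\rm Sgn}$ is surjective). Using the surjection $GL_{n-1}(\Z)\twoheadrightarrow GL_{n-1}(\Z/2\Z)$, lift $T^{-1}$ to $\widetilde T\in GL_{n-1}(\Z)$; then the new $\Z$-basis $(u_j)=(v_i)\widetilde T$ of $A_{\rm tors}$ satisfies ${\rm Sgn}(u_j)=\sum_k (T\widetilde T)_{kj}e_k=e_j\pmod 2$, i.e.\ $\sgn[u_j](\alpha_i)=-1$ iff $i=j$. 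The main hurdle in this plan is the density step in the converse direction, which follows at once from $\sigma(A)$ being a rank-$n$ lattice in $\R^n$; the rest is routine linear-algebraic bookkeeping on the unit group and its reduction modulo~$2$.
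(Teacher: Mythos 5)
Your proof is correct, and its skeleton is the same as the paper's: reduce $\PI$ to surjectivity of ${\rm Sgn}$, then upgrade surjectivity to the basis statement. The difference is in how much is actually proved. The paper's argument consists of the bare assertion that $\PI$ holds \emph{if and only if} the units realize every sign pattern on $\{\alpha_1,\ldots,\alpha_n\}$; only the ``if'' direction is easy (it is the earlier proposition on units separating closed opens). You supply a genuine proof of the harder ``only if'' direction, which the paper omits: since $\sigma(A)$ is a full-rank lattice in $\R^n$, the $\Q$-span $\sigma(K)$ is dense, so every open orthant contains some $a\in A$; factoring $a$ into irreducibles in the PID $A$, noting that every nonzero prime of $A$ is non-real (finite residue field), and applying $\PI$ to each factor pushes the sign pattern of $a$ onto a unit. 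This lattice-density-plus-factorization step is the real content of the converse and is absent from the paper. Your finish is also more careful than the paper's: the paper concludes by saying the ``free $\Z$-modules'' $A_{\rm tors}$ and ${\mathcal A}$ have the same rank $n-1$, hence ${\rm Sgn}$ is an isomorphism --- but ${\mathcal A}\simeq(\Z/2\Z)^{n-1}$ is finite, not free, so the correct statement is the one you give: ${\rm Sgn}$ factors through $A_{\rm tors}/2A_{\rm tors}$, a surjection of finite groups of equal order is bijective, and a basis with the prescribed sign pattern is obtained by lifting the inverse matrix through the surjection $GL_{n-1}(\Z)\twoheadrightarrow GL_{n-1}(\Z/2\Z)$. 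In short: same route, but your version closes two gaps in the paper's own write-up.
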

\begin{proof}
The ring $A$ satisfies the condition \PI if and only if for all subset $S\subset \Spr A=\{\alpha_1,\ldots,\alpha_n\}$ there exists an invertible $u$ such that $u>0$ on $S$ and $u<0$ on $\Spr A\setminus S$. This is equivalent to saying that the application ${\rm Sgn}$ is surjective. 
Since the free $\Z$-modules $A_{{\rm tors}}$ and ${\mathcal A}$ have same rank equal to $n-1$, it is an isomorphism.
\end{proof}

\air
In Theorem \ref{main_pss_PID}, we use the assumption that $A$ is principal.  This hypothesis seems to be too restrictive : indeed not all number fields are principal, neither the coordinate rings of real affine irreducible non-singular varieties. But, these two classes of rings appear to be Dedekind domains. 
It gives a motivation to search for an extension of Theorem \ref{main_pss_smith} to the class of Dedekind domains.

\section{Dedekind domains}\label{Dedekind}

\begin{defn}A domain $A$ is called {\it Dedekind} if it is noetherian, integrally closed, and if any non zero prime ideal is maximal.
\end{defn}

Roughly speaking, we may find in a Dedekind domain, the counterpart of all the arithmetic properties (for instance the existence of gcd) we have in a PID. We just have to replace the product of elements with the product of ideals. For instance, the decomposition of an element into a product of irreducible element will be replaced, in a Dedekind domain, with the decompositon of an ideal into a product of prime ideals.  \par
Note that  any Dedekind domain $A$ satisfies condition $\GE$ since $A/(p)$ is a field and hence regular for any irreducible $p$. 
\par Since all the ideals in a Dedekind domain $A$ are not necessarily principal, we shall give a counterpart for the definition of condition \PI :\air
\PI\quad Let $I=(f)$ be a principal ideal which is non real in $A$ (each associated minimal prime ideal is non-real). Then, $f$ is associated in $A$ to an element which is positive everywhere on $\Spr A$. 
\air
We may also note that the notion of Smith Normal form still exists in Dedekind domain.
Although, in general this form is not as simple as the one we have in the case of a principal ring. For instance, we may have to change the format of the matrice (see for instance \cite{Co1}). But for our purpose, we will limit oursevles to matrices which admits a diagonal Smith Normal Form. If we
 denote by $\dd_k(M)$ the ideal in $A$ generated by the $k\times k$ minors of the matrix $M$, the following results (which can be deduced from \cite{CR} for instance) give a criterion for a matrix to have a diagonal Smith Normal Form :

\begin{thm}\label{dd_smith_form}
Let $A$ be a Dedekind domain. Let $M$ and $N$ be two matrices in $A^{n\times n}$ such that $det(M)\not =0$ and $det(N)\not =0$. Then, there is $P,Q\in GL_n(A)$ such that $M=PNQ$ if and only if $\dd_k(M)=\dd_k(N)$ for all $k=1\ldots n$.
\end{thm}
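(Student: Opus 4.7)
My plan is to handle the two directions by distinct methods. For the ``only if'' direction, if $M=PNQ$ with $P,Q\in GL_n(A)$, two applications of Lemma~\ref{div_minors} yield $\dd_k(M)\subset\dd_k(N)$, and the reverse inclusion follows from the symmetric identity $N=P^{-1}MQ^{-1}$.

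For the converse, I would translate the problem into module theory. Attach to each matrix the cokernel $T_M:=A^n/MA^n$, a finitely presented torsion $A$-module (torsion because $\det(M)\neq 0$) fitting in a length-one free resolution $0\to A^n\xrightarrow{M}A^n\to T_M\to 0$. A direct computation identifies the determinantal ideal $\dd_k(M)$ with the $(n-k)$-th Fitting ideal of $T_M$, so the hypothesis $\dd_k(M)=\dd_k(N)$ for all $k$ translates into the equality of Fitting ideals of $T_M$ and $T_N$. By the structure theorem for finitely generated torsion modules over a Dedekind domain, such a module is determined up to isomorphism by its Fitting ideals (equivalently, by the chain of invariant factor ideals in its canonical decomposition); hence there is an isomorphism $\phi:T_M\xrightarrow{\sim}T_N$.

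The final step is to lift $\phi$ to an equivalence of matrices. Using the projectivity of the free module $A^n$, lift $\phi$ to a morphism $P:A^n\to A^n$; commutation with the cokernel projections forces $PM$ to land in $NA^n$, producing $Q:A^n\to A^n$ with $PM=NQ$. Performing the same construction for $\phi^{-1}$ provides candidate inverses, and a nine-lemma style diagram chase then upgrades these lifts to elements of $GL_n(A)$, yielding the desired equivalence $M=P^{-1}NQ$. The main obstacle will be precisely this invertibility step: a lift of a module isomorphism is not automatically an element of $GL_n(A)$, and in the Dedekind setting one cannot sidestep the issue by a direct diagonal reduction as in the PID case. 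The projectivity of finitely generated ideals and the uniqueness statement of the structure theorem are what make the argument go through.
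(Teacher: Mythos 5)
The paper does not actually prove this theorem --- it only cites Curtis--Reiner --- so your proposal has to stand on its own. Your ``only if'' direction is fine (Lemma~\ref{div_minors}, applied on both sides and to the inverse factorization, and using that $\dd_k$ is invariant under transposition). Your translation of the converse into module theory is also the right framework and is essentially the Steinitz/Curtis--Reiner approach: $\dd_k(M)=\mathrm{Fitt}_{n-k}(A^n/MA^n)$, and a finitely generated torsion module over a Dedekind domain is indeed determined by its Fitting ideals (localize at each maximal ideal, where $A_{\mathfrak p}$ is a DVR, and use that a torsion module is the direct sum of its primary components). So you correctly reach $T_M\cong T_N$.

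The genuine gap is the final lifting step, and your ``nine-lemma style diagram chase'' does not close it. Lifting an isomorphism $\phi:T_M\to T_N$ and its inverse gives $P,P'$ with $PM=NQ$, $P'N=MQ'$ and $P'P=I+MS$ for some $S$; but $I+MS$ need not be invertible, and no diagram chase will repair this, because the obstruction is real: take $A=\Z$, $n=1$, $M=N=(4)$ and $\phi$ equal to multiplication by $3$ on $\Z/4\Z$. Every lift of $\phi$ is multiplication by an integer congruent to $3$ modulo $4$, hence never a unit, and $P'P=9=1+4\cdot 2$ is not invertible. What is true --- and what must be proved --- is that \emph{some} isomorphism $T_M\to T_N$ admits an invertible lift; this is not formal. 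Over a PID one gets it from the Smith normal form; over a Dedekind domain the standard route is the invariant factor theorem for a pair of lattices: one writes $A^n=\mathfrak a_1x_1\oplus\cdots\oplus\mathfrak a_nx_n$ with $MA^n=\mathfrak a_1\mathfrak e_1x_1\oplus\cdots\oplus\mathfrak a_n\mathfrak e_nx_n$, $\mathfrak e_1\supseteq\cdots\supseteq\mathfrak e_n$, notes that $\mathfrak e_1\cdots\mathfrak e_k=\dd_k(M)\dd_{k-1}(M)^{-1}\cdots$ determines the $\mathfrak e_i$ from the data, and then uses $\mathfrak a\oplus\mathfrak b\cong A\oplus\mathfrak a\mathfrak b$ together with $\mathfrak a_1\cdots\mathfrak a_n$ principal to show that the isomorphism class of the pair $(A^n,MA^n)$ depends only on the $\mathfrak e_i$. (Schanuel plus cancellation of projectives, which your sketch gestures at, only yields stable equivalence $M\oplus I_n\sim N\oplus I_n$ and still leaves a descent to prove.) You did flag the invertibility of the lift as the main obstacle, which is the right instinct, but as written the proposal asserts rather than proves exactly the point where the Dedekind-specific work lies.
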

If we erase the assumtion $\det(M)\cdot\det(N)\not=0$, then the result is still valid with the additional condition ${\mathfrak C}(M)={\mathfrak C}(N)$ (where ${\mathfrak C}(\cdot)$ denotes the {\it column ideal class} of a matrice).

As a consequence, 

\begin{cor}
A matrix $M$ in $A^{n\times n}$ such that $\det(M)\not =0$ admits a Smith Normal Form : $M\sim\diag(d_1,\ldots,d_n)$ with $d_1\vert\ldots\vert d_n$, $d_n\not =0$, if and only if  all the ideals $\dd_1(M),\ldots,\dd_n(M)$ are principal.
\end{cor}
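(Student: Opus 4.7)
The plan is to prove both implications by relating the ideals $\dd_k(M)$ to the entries of a candidate diagonal form, and then invoking Theorem \ref{dd_smith_form} (which is available since the condition $\det(M)\neq 0$ forces all the involved determinants to be nonzero). For the ($\Rightarrow$) direction, if $M\sim D=\diag(d_1,\ldots,d_n)$ with $d_1\mid\ldots\mid d_n$, then Theorem \ref{dd_smith_form} gives $\dd_k(M)=\dd_k(D)$. A direct inspection of the $k\times k$ minors of a diagonal matrix shows that each is either zero or a product $d_{i_1}\cdots d_{i_k}$ with $i_1<\ldots<i_k$, and the divisibility chain makes every such product a multiple of $d_1\cdots d_k$; hence $\dd_k(D)=(d_1\cdots d_k)$ is principal.

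For the ($\Leftarrow$) direction, write $\dd_k(M)=(a_k)$ for each $k$. Expanding any $k\times k$ minor along a row (Laplace expansion) exhibits it as an $A$-linear combination of $(k-1)\times (k-1)$ minors, so $\dd_k(M)\subseteq \dd_{k-1}(M)$, i.e. $a_{k-1}\mid a_k$. Since $\det(M)\neq 0$, the ideal $\dd_n(M)=(\det M)$ is nonzero, and the inclusion chain forces all $a_k\neq 0$. I then define $d_1=a_1$ and $d_k=a_k/a_{k-1}$ for $k\geq 2$, so that $d_1\cdots d_k=a_k$ and in particular $d_n\neq 0$.

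The main obstacle is verifying the chain of divisibilities $d_k\mid d_{k+1}$, which is equivalent to the inclusion $\dd_k(M)^2\subseteq \dd_{k-1}(M)\dd_{k+1}(M)$. I would argue locally: for each maximal ideal $\mathfrak{m}$ of $A$, the localization $A_\mathfrak{m}$ is a discrete valuation ring (in particular a PID), so the usual Smith Normal Form exists for $M$ viewed over $A_\mathfrak{m}$, and its diagonal entries form a divisor chain whose elementary ideals are precisely $\dd_k(M)A_\mathfrak{m}$. The required inclusion therefore holds after localization at every maximal ideal, and since in a Dedekind domain an inclusion of fractional ideals can be checked locally, it holds globally.

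With $d_k\mid d_{k+1}$ now established, set $D=\diag(d_1,\ldots,d_n)$. By the computation already done for the ($\Rightarrow$) direction, $\dd_k(D)=(d_1\cdots d_k)=(a_k)=\dd_k(M)$ for every $k$, and both $\det(M)$ and $\det(D)=d_1\cdots d_n$ are nonzero. Theorem \ref{dd_smith_form} then yields $M\sim D$, which is the desired Smith Normal Form.
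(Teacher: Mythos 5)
Your overall strategy is exactly the deduction the paper intends: the corollary is stated there without proof as an immediate consequence of Theorem \ref{dd_smith_form}, and your argument supplies the missing details correctly --- in particular the verification that the quotients $d_k=a_k/a_{k-1}$ actually form a divisor chain, which is the one point of real content and which the paper glosses over. One slip to fix: the condition $d_k\mid d_{k+1}$, i.e. $(d_{k+1})\subseteq(d_k)$, translates after clearing denominators into $\dd_{k-1}(M)\,\dd_{k+1}(M)\subseteq \dd_k(M)^2$, not the reverse inclusion you wrote (your inclusion $\dd_k^2\subseteq\dd_{k-1}\dd_{k+1}$ would force the $p$-adic valuations $v_k$ of the local elementary divisors to satisfy $v_k\geq v_{k+1}$, which is generally false). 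Your localization argument is unaffected: over each $A_{\mathfrak m}$ the Smith form gives $v_1\leq\cdots\leq v_n$, which is precisely the correct inclusion $\dd_{k-1}\dd_{k+1}\subseteq\dd_k^2$ locally, and inclusions of ideals are indeed checkable locally at all maximal ideals. With that sign of the inclusion corrected, the proof is complete and matches the intended route.
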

\air

We now are able to formulate the counterpart of Question \ref{question} for Dedekind Domains :
\begin{question}\label{question_dd}
Let $M$ be a symmetric square matrix with entries in a formally real Dedekind Domain $A$. Assume that $M$ is positive semi-definite and admits a diagonal Smith Normal Form. Are all the diagonal elements of the Smith Normal Form positive semi-definite up to association ?
\end{question}

Here is a possible extension of Theorem \ref{main_pss_PID} which can be seen as an answer to Question \ref{question_dd}, despite the non satisfactory hypothesis about the decomposition into principal prime ideals :

\begin{thm}\label{main_dedekind_1}
Let $A$ be a Dedekind domain which satisfies the property $\PI$. Let $M$ be a symmetric matrix in $A^{n\times n}$ which we suppose te be positive semi-definite on $\Spr A$. 
Suppose that  for all $k=1\ldots n$, the ideal $\dd_k(M)$ is principal, namely $\dd_k(M)=(d_k)$, with $d_k\in A$. Suppose morever that all the primes appearing in the decomposition of $\dd_k(M)$ are principals.\par
Then, for all $k=1\ldots r$, the element $d_k\in A$ can be associated to an element $d'_k\in A$ such that $d'_k$ is positive everywhere on $\Spr A$. 
\end{thm}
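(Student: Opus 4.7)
The plan is to transport the proof of Theorem \ref{main_pss_smith} to the Dedekind setting, using the hypothesis that every prime appearing in the factorisation of each $\dd_k(M)$ is principal as a substitute for unique factorisation. First I may assume $A$ is formally real; condition $\GE$ is automatic here, since $A/(p)$ is a field (hence regular) for every non-zero principal prime ideal $(p)$ of a Dedekind domain. Using $\PI$, I split each $d_k = e_k f_k$ with $f_k$ strictly positive on $\Spr A$ and $e_k$ a product of real principal primes, so that it suffices to control the sign of each $e_k$.

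Next I reduce to the case where $\det(M)\neq 0$ by isolating the $r\times r$ principal block exactly as in the proof of Theorem \ref{main_pss_smith}; the hypotheses (psd, principal $\dd_k$ with principal prime factors) transfer to this block. Because all $\dd_k(M)$ are principal, the Corollary to Theorem \ref{dd_smith_form} then yields an honest diagonal Smith Normal Form $M\sim D=\diag(\delta_1,\ldots,\delta_n)$ with $\delta_1\mid\cdots\mid\delta_n$ and $d_k=\delta_1\cdots\delta_k$ up to units. Writing $M=PDQ$ and replacing $M$ by $P^{-1}M(P^{-1})^T=DQ(P^{-1})^T$, I reduce to the case $M=DQ$ with $Q\in GL_n(A)$ and $M$ still symmetric and positive semi-definite on $\Spr A$.

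Now I argue by contradiction. Suppose some $\delta_m$ is not associated to a positive element; then there is a real principal prime $p$ which changes sign on $\Spr A$ and for which $\nu_p(\delta_m)$ is odd, where $\nu_p$ is the valuation attached to the height-one prime $(p)$. Since by hypothesis every prime appearing in $\dd_k(M)$ is principal, $\nu_p$ behaves on the $\delta_k$'s and on the relevant elements of $A$ exactly as a $p$-adic valuation in a UFD. After re-ordering so that $\nu_p(\delta_i)$ is even for $i<m$ and $\nu_p(\delta_m)\leq\nu_p(\delta_j)$ for $j>m$, I reproduce verbatim the three cases of the proof of Theorem \ref{main_pss_smith} to conclude that $p\mid q_{i,j}$ for $1\leq i\leq m\leq j\leq n$: the case $i=j=m$ uses positivity of $d_m q_{m,m}$ together with $\GE$; the case $i=m<j$ uses positivity of the symmetric $2\times 2$ minor and Lemma \ref{valuation_real}; the case $i<m\leq j$ follows by symmetry from $d_i q_{i,j}=d_j q_{j,i}$. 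Applying Lemma \ref{p_divides_matrix} then forces $p\mid\det(Q)$, contradicting $Q\in GL_n(A)$.

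The main obstacle I anticipate is checking that Lemma \ref{valuation_real} genuinely extends to this context. Its proof was written for a regular formally real UFD, but on inspection only two ingredients are actually used: that $A/(p)$ is regular, so that any $\alpha\in\Spr A$ with support $(p)$ admits generizations of support $(0)$ with prescribed sign for $p$, and that division by $p$ is meaningful at the level of elements and not merely of ideals. The first is automatic in a Dedekind domain, while the second is exactly what the principal-prime hypothesis on $\dd_k(M)$ guarantees for the elements encountered in the argument. Once this extension is verified, the rest of the proof is a direct translation of the PID case.
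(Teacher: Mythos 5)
Your proposal follows essentially the same route as the paper: reduce to the nondegenerate block, invoke Theorem \ref{dd_smith_form} to get a diagonal Smith form, and then rerun the proof of Theorem \ref{main_pss_smith} with principal primes playing the role of irreducibles, noting that $\GE$ and Lemma \ref{regular_generization} are automatic since $A/\p$ is a field for any nonzero prime of a Dedekind domain. Your explicit check that Lemma \ref{valuation_real} transfers is a point the paper leaves implicit, but the argument is the same.
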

\begin{proof}
Note first that the reduction to the case $r=n$ enables us to apply Theorem \ref{dd_smith_form}.\par
We follow the proof of Theorem \ref{main_pss_smith}, replacing irreducibles by prime ideals. The decomposition of $(d_k)$ into the product of its associated prime ideals (which all are principal) looks very much like the decomposition in a UFD. \par 
So $d_k=e_kf_k$ where $e_k$ is a product of some elements lying in some real prime ideals and $f_k$ is a product of some elements lying in some non-real prime ideals. Thanks to property \PI we may assume that $f_k>0$ on all $\Spr A$.
 \par
Valuations relative to irreducibles in an UFD are replaced with valuations relative to prime ideals in a Dedekind domain.
\par Note also that we have a version of Lemma \ref{regular_generization} in the Dedekind domain $A$
 : if $\p$ is real prime ideal different from $(0)$, then $\p$ is maximal and $A/\p$ is regular.
The rest of the proof follows.
\end{proof}

Another solution, if we want to get rid off the unsatisfactory assumption of the previous Theorem, is to restrict the conclusion by localization :
\begin{thm}\label{main_dedekind_2}
Let $A$ be a Dedekind domain which satisfies the property $\PI$. Let $M$ be a symmetric matrix in $A^{n\times n}$ which we suppose te be positive semi-definite on $\Spr A$. 
Suppose that  for all $k=1\ldots n$, the ideal $\dd_k(M)$ is principal, namely $\dd_k(M)=(d_k)$ with $d_k\in A$.\par
Then, for all prime ideal $\p$ in $A$ and all $k=1\ldots r$, the element $d_k$ can be associated in  $A_\p$ to an element $d'_k\in A_\p$ such that $d'_k$ is positive everywhere on $\Spr A$. 
\end{thm}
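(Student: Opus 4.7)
The plan is to fix an arbitrary prime ideal $\p$ of $A$ and reduce to Theorem \ref{main_pss_PID} by localization. Because $A$ is a Dedekind domain, the localization $A_\p$ is a principal ideal domain --- the fraction field of $A$ when $\p=(0)$, and a discrete valuation ring otherwise --- so the PID version of the theorem is at our disposal. I would apply it to the matrix $M$ viewed as an element of $A_\p^{n\times n}$.

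Before doing so I must check its hypotheses over $A_\p$. Symmetry is automatic. For positive semi-definiteness, every $\alpha\in\Spr A_\p$ arises from a morphism $A_\p\to R$ into a real closed field $R$, which restricts along $A\hookrightarrow A_\p$ to a point of $\Spr A$ where $M$ is already psd; hence $M$ remains psd over $A_\p$. Condition $\PI$ is inherited by $A_\p$ from $A$ thanks to Proposition \ref{localization}, and inspection of its proof shows that the positive associate of a non-real irreducible of $A_\p$ may be taken to be (the image of) an element of $A$ positive on all of $\Spr A$, not merely on $\Spr A_\p$.

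Applying Theorem \ref{main_pss_PID} then yields a Smith Normal Form $M\sim\diag(e_1,\ldots,e_r,0,\ldots,0)$ in $A_\p$, with $e_1\mid\cdots\mid e_r$ and each $e_k$ associated in $A_\p$ to an element positive on $\Spr A$. The conclusion follows from a determinantal comparison: by hypothesis $\dd_k(M)=(d_k)$ in $A$, hence also in $A_\p$, while Proposition \ref{Unicity_Smith} applied to the diagonal form gives $\dd_k(M)=(e_1\cdots e_k)$ in $A_\p$. Therefore $(d_k)=(e_1\cdots e_k)$ in $A_\p$, which means $d_k$ is associated in $A_\p$ to a product of elements positive on $\Spr A$, which is itself positive on $\Spr A$.

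The argument is in essence a routine transport of the PID result along $A\to A_\p$; the substantive mathematical work has already been done in Theorem \ref{main_pss_smith}. The only genuine subtlety I anticipate is keeping track of the precise meaning of ``positive'': the statement demands positivity on $\Spr A$ while a naive localization would deliver only positivity on the subset $\Spr A_\p\subset\Spr A$, so one needs the traceable form of Proposition \ref{localization} to certify that the positive associates of the Smith invariants already come from elements positive everywhere on $\Spr A$.
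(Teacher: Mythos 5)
Your proposal is correct and follows essentially the same route as the paper: localize at $\p$, note that $A_\p$ is a PID inheriting condition \PI by Proposition \ref{localization}, and apply Theorem \ref{main_pss_PID}. In fact you are more careful than the paper on the one delicate point --- upgrading positivity from $\Spr A_\p$ to $\Spr A$ --- which the paper dispatches with a one-line ``by specialization,'' whereas you trace the positive associates back through Proposition \ref{localization} and add the determinantal comparison $(d_k)=(e_1\cdots e_k)$ in $A_\p$ to connect the stated generators of $\dd_k(M)$ with the Smith invariants.
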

\begin{proof}
Since the ring $A$ satisfies condition $\PI$, then by localization, all the rings $A_\p$ satisfies the condition $\PI$ too. Moreover $A_p$ is a PID (confer \cite[Paragrahe 2, Th\'eor\`eme 1]{Bo}), so we may directly use  Theorem \ref{main_pss_PID}, to get  $d'_k>0$ on all $\Spr A_\p$. By specilization, we get also $d'_k>0$ on all $\Spr A$.
\end{proof}

For instance, Theorems \ref{main_dedekind_1} and \ref{main_dedekind_2} are true for the ring $A=\R[x,y]/(x^2+y^2-1)$.

\remark{
By, \cite[Paragraphe 3, Exemple 1)]{Bo}, if a Dedekind domain is UFD, then it is principal.}
\endremark

\subsection{Another counterexamples}
We state some counterexamples to Question \ref{question_dd}, all coming from the class of hyperelliptic curves. So we need to precise what the units look like in these rings :

\begin{lem}\label{hyperellipt_inver}
Let $A$ be the coordinate ring of the real affine hyperelliptic plane curve of equation $y^2-p(x)=0$, were $p(x)\in\R[x]$ has only single and real roots. If we assume that $\deg p$ is odd or if the leading coefficient of $p(x)$ is negative, then 
the set of units in $A$ is $\R^*$. 
\end{lem}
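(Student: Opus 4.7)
The plan is to exploit the fact that, under the stated hypotheses, $y^2 - p(x)$ is irreducible in $\R[x,y]$, so $A$ is a domain and a free $\R[x]$-module of rank $2$ with basis $\{1,y\}$. (Irreducibility follows because $p$, having only simple roots, is squarefree in $\R[x]$, and under either hypothesis it cannot be a square in $\R[x]$.) This lets me write every element of $A$ uniquely as $f(x) + y\,g(x)$ with $f,g \in \R[x]$, and introduce the Galois-style norm associated to the involution $\sigma\bigl(f + yg\bigr) = f - yg$:
$$N(f + yg) = (f+yg)(f-yg) = f(x)^2 - p(x)\,g(x)^2 \in \R[x].$$
Since $N$ is multiplicative and $\R[x]^* = \R^*$, every unit $u = f + yg \in A^*$ satisfies $N(u) \in \R^*$, i.e.\ $f^2 - p\,g^2$ is a nonzero real constant.

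Once this is established, the proof reduces to a degree and leading-coefficient analysis; I will assume $g \neq 0$ and derive a contradiction in each of the two cases. In the first case $\deg p$ is odd, so $\deg(p g^2) = \deg p + 2\deg g$ is odd, whereas $\deg(f^2) = 2\deg f$ is even; the two leading terms sit in distinct degrees and cannot interact, whence $\deg(f^2 - p g^2) = \max(2\deg f,\ \deg p + 2\deg g) \geq \deg p \geq 1$, contradicting $N(u) \in \R^*$. In the second case $\deg p$ is even with negative leading coefficient, so the leading coefficient of $-p g^2$ is strictly positive; the leading coefficient of $f^2$ is also strictly positive, so no cancellation between $f^2$ and $-pg^2$ is possible in $f^2 - p g^2$, and again $\deg(f^2 - pg^2) \geq \deg p \geq 2$, contradiction. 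In both cases we are forced to $g = 0$, and then $f^2 \in \R^*$ gives $f \in \R^*$.

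The only genuine subtlety is the non-cancellation step in the even-degree case: this is precisely the place where the sign hypothesis on the leading coefficient of $p$ enters, ensuring that $f^2$ and $-p g^2$ contribute leading terms of the same (positive) sign rather than opposite signs. Conversely, with $p$ of even degree and \emph{positive} leading coefficient one can have genuine cancellation, which is exactly what allows nontrivial units such as Pell-type elements to appear — explaining why the two cases of the hypothesis are the natural ones to combine.
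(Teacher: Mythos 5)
Your proof is correct, and it reaches the same decisive computation as the paper --- showing that $f^2 - p\,g^2$ cannot be a nonzero constant unless $g=0$, by the parity-of-degrees argument when $\deg p$ is odd and the same-sign-leading-coefficients argument when the leading coefficient of $p$ is negative --- but you arrive there by a cleaner route. The paper writes a candidate inverse $c(x)y+d(x)$, expands the product, and extracts from the resulting system the divisibility relations $a\mid c$, $b\mid d$ (and conversely) to conclude that the inverse must be a scalar multiple of the conjugate $b - ay$; only then does it land on the constancy of $b^2 - p a^2$. You instead observe that the conjugation $\sigma(f+yg)=f-yg$ is a ring involution fixing $\R[x]$, so the norm $N(u)=u\,\sigma(u)=f^2-pg^2$ is multiplicative with values in $\R[x]$, and any unit of $A$ must have norm in $\R[x]^*=\R^*$. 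This sidesteps the coprimality and divisibility bookkeeping entirely (which in the paper's write-up is in fact slightly garbled in the passage from the first system to the second), and it makes transparent exactly where each hypothesis enters. Your closing remark about Pell-type units when $\deg p$ is even with positive leading coefficient matches the paper's own example $\R[x,y]/(y^2-(x^2-1)(x^2-2))$. One small point worth a sentence in a final write-up: in the even-degree case you should dispose of the subcase $f=0$ separately (there $N(u)=-pg^2$ is nonconstant for $g\neq 0$ anyway), since your ``both leading coefficients are positive'' phrasing tacitly assumes $f\neq 0$.
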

\begin{proof}
Any element $f$ of $A$ admits a unique representation of the form $a(x)y+b(x)$ where $a,b\in\R[x]$. Then, $f$ is invertible in $A$ if and only if there is $g=c(x)y+d(x)$ such that in $\R[x]$, we have  
$$\left\{\begin{array}{rcc}
b(x)d(x)+p(x)a(x)c(x)&=&1\\
a(x)d(x)+b(x)c(x)&=&0\\
\end{array}\right.$$
The first equation shows that $a$ and $b$ are coprime, so by the second we deduce that $b\vert d$ and $a\vert c$. Likewise, we get the reverve divisibility property, so $c=\alpha a$ and $d=\beta b$ with $\alpha,\beta\in\R^*$.\par
The previous system becomes
$$\left\{\begin{array}{rcc}
\beta (b(x)^2-p(x)a(x)^2)&=&1\\
(\beta+\alpha) a(x)b(x)&=&0\\
\end{array}\right.$$
The case $b(x)=0$ is impossible because of the first equality, whereas the  case $a(x)=0$ yields $b(x)\in\R^*$ as wanted.\par It remains the treat the case $a(x)b(x)\not =0$. Then, $\alpha=-\beta$ and we just note that the polynomial $b(x)^2-p(x)a(x)^2$ cannot be a constant if $\deg p(x)$ is odd or if the leading coefficient of $p(x)$ is negative.
\end{proof}

Note that if none of the conditions \ref{hyperellipt_inver} are satisfied, then it may exist in $A$ other invertibles than $\R^*$, as it is the case when $A=\R[x,y]/(y^2-(x^2-1)(x^2-2))$. For instance, the element $y+\left(x^2-\frac{3}{2}\right)$ is invertible with inverse 
$-4\left(y-\left(x^2-\frac{3}{2}\right)\right)$.

\begin{vsex}
Consider the cubic of coordinate ring $A=\R[x,y]/(y^2-x(x^2-1))$. It has two connected components which can be separated by the polynomial $q=x-\frac{1}{2}$. Note that $(q)$ is a non-real prime ideal such that $q^2=\frac{1}{4}+x^2-x$.  
As in Remark \ref {vs_ex}, to produce a counterexample, it suffices to take $\epsilon=\frac{1}{4}$, $b=1$, $d_1=a=c=q$, $e_1=x^2-x$ which is such that $e_1\geq 0$ on $\Spr A$.
\air

As another example, we may also consider the ring $B=\R[x,y]/(y^2+(x^2-1)(x^2-2))$, where the prime ideal $(x)$ in $B$ separates the two connected components of the variety. And we produce
a counterexample based upon the identity 
$$x^2=\frac{1}{3}\left(2+x^4+y^2\right)$$
Hence, we have $e_1=\frac{1}{3}\left(x^4+y^2\right)$ which is not only positive, but also a sum of squares in 
$B$.
\end{vsex}

This last argument could be repeated to any affine irreducible non-singular real plane curve which is is compact and has several connected components. 

\begin{prop}
Let $A=\R[V]$ be the coordinate ring of an affine non-singular irreducible and compact curve $V$. We assume moreover that the only units of $A$ are constants.\par Then, Question \ref{question} admits a negative answer for the ring $A$ if $V(\R)$ has at least two connected components.
\end{prop}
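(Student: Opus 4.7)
The strategy is to mimic the explicit counterexamples of Remark \ref{vs_ex}: I want to produce an element $q\in A$ of non-constant sign, together with a positive real $\epsilon$ and an $e_1\geq 0$ on $\Spr A$ satisfying $q^2=\epsilon+e_1$, then plug into the $2\times 2$ template.

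Since $V(\R)$ is a compact Hausdorff space with at least two connected components and $A=\R[V]$ contains the coordinate functions (which separate points of $V(\R)$), Stone--Weierstrass asserts that $A$ is uniformly dense in $\mathcal{C}(V(\R),\R)$. Approximating within $\frac{1}{2}$ the locally constant function equal to $+1$ on one connected component $V_1$ and $-1$ on $V(\R)\setminus V_1$ yields $q\in A$ satisfying $q>\frac{1}{2}$ on $V_1$ and $q<-\frac{1}{2}$ on $V(\R)\setminus V_1$. In particular $q$ changes sign on $\Spr A$, does not vanish on $V(\R)$, and $q^2$ admits a positive lower bound $m>0$ on $V(\R)$ by compactness. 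I then pick a rational $\epsilon\in(0,m)$; it is a positive unit of $A$ by the assumption $A^*=\R^*$, and $q^2-\epsilon>0$ on $V(\R)$. Being a first-order sentence in the language of ordered fields with parameter in $\Q\subset\R$, this strict inequality transfers by Tarski--Seidenberg to $V(R)$ for every real closed extension $R$ of $\R$, so that $e_1:=q^2-\epsilon\geq 0$ on all of $\Spr A$.

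Following Remark \ref{vs_ex} with $a=c=d_1=q$ and $b=1$, one now forms
$$M=\left(\begin{array}{cc}q^2&q e_1\\q e_1&q^2 e_1\end{array}\right)=\left(\begin{array}{cc}q&0\\0&q e_1\end{array}\right)\left(\begin{array}{cc}q&e_1\\1&q\end{array}\right),$$
which is symmetric and psd on $\Spr A$ since $q^2\geq 0$ and $e_1\geq 0$. The right-hand factor has determinant $q^2-e_1=\epsilon\in A^*$, so $M$ is equivalent over $A$ to $\diag(q,qe_1)$, its Smith Normal Form. Any associate of $d_1=q$ has the shape $uq$ with $u\in A^*=\R^*$, and still changes sign on $V(\R)$; hence $d_1$ is not associated to any element $\geq 0$ on $\Spr A$, which gives a negative answer to Question \ref{question} for $A$.

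The only delicate step is the passage from strict positivity on $V(\R)$ to nonnegativity on all of $\Spr A$. This is precisely why I arrange a \emph{strict} inequality with a numerical constant $\epsilon$ in the base field: Tarski--Seidenberg transfer then applies directly, and no Positivstellensatz-type machinery is required.
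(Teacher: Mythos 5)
Your proof is correct, and it lands on exactly the same $2\times 2$ template as Remark \ref{vs_ex} (with $a=c=d_1=q$, $b=1$ and $q^2=\epsilon+e_1$, $\epsilon\in\R^*$), but it reaches the two key inputs by a genuinely different route. Where the paper invokes Marshall's separation theorem \cite{Ma} to produce an element separating the connected components, you use Stone--Weierstrass on the compact space $V(\R)$ (legitimate, since $A$ contains the constants and the coordinate functions separate points), which is more elementary and gives for free that $q$ is bounded away from $0$; and where the paper applies Schm\"udgen's Positivstellensatz \cite{Sd} to realize $q^2-r$ as a sum of squares, you observe that Remark \ref{vs_ex} only needs $e_1\geq 0$ on $\Spr A$, which you extract from the strict inequality $q^2-\epsilon>0$ on $V(\R)$ by model-completeness of real closed fields (equivalently, by the Artin--Lang correspondence between semialgebraic subsets of $V(\R)$ and constructible subsets of $\Spr A$ in \cite{BCR}). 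So your argument dispenses with the Positivstellensatz entirely, at the price of getting $e_1\geq 0$ rather than an explicit sum of squares as in the paper's worked examples; both suffice for the conclusion. One small imprecision: the sentence you transfer has parameters equal to the coefficients of $q$ and of the equations of $V$, which lie in $\R$ rather than $\Q$. This is harmless, because every morphism $A\to R$ into a real closed field restricts to the unique (order-preserving) embedding of $\R$ into $R$, so the transfer still applies; but the justification should mention this rather than rely on rationality of the parameters.
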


\begin{proof}
Assume that $\Spr A$ has at least two connected components, say $C_1$ and $C_2$. According to \cite{Ma}, we may find $a\in A$ which separates $C_1$ and $C_2$. Necessarily $(a)$ is non-real since it does not vanish on $\Spr A$. Since $V(\R)$ is compact, there is a rationnal number $r\geq 0$ such that $a^2-r> 0$ on $V(\R)$. 
By Schm\"udgen Positivestellensatz \cite{Sd}, we get that $a^2-r$ is a sum of squares in $A$. Thus, as in Remark \ref {vs_ex}, we are able to produce a counterexample to Question \ref{question_dd}.
\end{proof}


\end{document}